\newtheorem{thm}{Theorem}[section]
\newtheorem{cor}[thm]{Corollary}
\newtheorem{lem}[thm]{Lemma}
\newcommand{\C}{{\mathbb C}}
\newcommand{\Hy}{{\mathbb H}}
\newcommand{\R}{{\mathbb R}}
\newcommand{\Sp}{{\mathbb S}}
\DeclareMathOperator{\arcsinh}{arcsinh}
\DeclareMathOperator{\arctanh}{arctanh}
\DeclareMathOperator{\dist}{dist}
\DeclareMathOperator{\re}{Re}
\DeclareMathOperator{\im}{Im}
\DeclareMathOperator{\up}{up}
\DeclareMathOperator{\rgh}{rgh}
\title{Quasiconformal embeddings of Y-pieces}
\author{Peter Buser, Eran Makover, Bjoern Muetzel and Robert Silhol}
\begin{document}

\maketitle

\begin{abstract}
In this paper we construct quasiconformal embeddings from Y-pieces that contain a short boundary geodesic into degenerate ones. These results are used in a companion paper to study the Jacobian tori of Riemann surfaces that contain small simple closed geodesics.\\
\\
Mathematics Subject Classifications (2010): 30F30, 30F45 and 30F60.\\
Keywords: quasiconformal maps, pair of pants decomposition and Teichm\"uller space.\\
\end{abstract}

\section{Introduction}

To study the behavior of geometric quantities in degenerating families of compact hyperbolic Riemann surfaces various types of standardized mappings have been devised in the literature that allow one to compare them with the corresponding quantities on the limiting surface. Colbois-Courtois \cite{cc} for instance, use suitably stretching maps of Y-pieces (see a few lines below) to prove convergence of the so-called small eigenvalues of the Laplacian, while Ji \cite{ji} investigates the large eigenfunctions with the help of the infinite energy harmonic maps from \cite{wf}. More recently, for the purpose of constructing quasiconformal deformations of Fuchsian groups with particular limit sets, Bishop \cite{bi1, bi2} makes use of quasiconformal mappings of Y-pieces with exponential behavior in the thin ends that found further applications to quasiconformal mapping class groups, the augmented Teichmüller space and Riemann surfaces of infinite type, \cite{fm},  \cite{mo}, \cite{alp1, alp2}.
 
A \textit{Y-piece} (or \textit{pair of pants}) is a hyperbolic Riemann surface $Y$ of signature $(0,3)$ whose boundary consists of closed geodesics and punctures. If the latter occur $Y$ is called \textit{degenerate}. Y-pieces serve as building blocks: all finite area hyperbolic Riemann surfaces but also many others can be decomposed into or built from them. (See e.g. to \cite{bu}, Chapter 3.)

In the present paper we construct quasiconformal \textit{embeddings} of Y-pieces into degenerate and nearly degenerate ones, together with rather sharp dilatation estimates (\textbf{Theorem~\ref{thm:main}}, \textbf{Theorem~\ref{thm:mainrefined}}). The mappings can be extended in several ways to the Riemann surfaces in which the Y-pieces occur and will be used in \cite{bmms} to study Jacobians of Riemann surfaces that lie close to the boundary of moduli space. By resorting to embeddings rather than surjections we are in a position to get particularly small dilatations. This may perhaps be useful also for other investigations.

From the point of view of quasiconformal mapping theory, as e.g. in Gehring's handbook article \cite{ge}, we are in the sector `triply connected plane domains'. However, the class of mappings studied here is more restrictive in that they are subject to imposed boundary conditions (see \textbf{Theorem~\ref{thm:main}}) that make them applicable to pasting constructions.

\section{Notation and results}\label{sec:results}
We denote by $Y=Y_{l_1,l_2,l_3}$ the Y-piece with boundary geodesics $\gamma_1,\gamma_2,\gamma_3$ of lengths $\ell(\gamma_i)=l_i$, for $i \in \{1,2,3\}.$ The geodesics are parametrized with constant speed $\gamma_i : [0,1] \rightarrow \partial Y$, with positive boundary orientation and such that $\gamma_i(0)$ coincides with the endpoint of the common perpendicular that runs from $\gamma_{i-1}$ to $\gamma_i$ (indices $\mod 3$). We call this the \textit{standard parametrization}. By the \textit{Collar lemma} (see \cite{bu}, p. 106) the sets
\begin{equation}
   \mathcal{C}_i = \{ p \in Y \mid \dist(p,\gamma_i) < \omega_i \} , \text{ \ where \ } \omega_i:= \arcsinh \left(\frac{1}{\sinh(\frac{l_i}{2})}\right)
\label{eq:collars}
\end{equation}
are homeomorphic to $[0,1) \times \Sp^1$ and pairwise disjoint. Moreover, for $0 \leq \rho \leq \omega_i$ the distance sets
\[
     \gamma^\rho_i = \{ p \in Y \mid \dist(p,\gamma_i) = \rho \}
\]
are embedded circles. We parametrize them with constant speed in the form $\gamma^\rho_i:[0,1] \rightarrow Y_{l_1,l_2,l_3}$, such that there is an orthogonal geodesic arc of length $\rho$ from $\gamma_i(t)$ to $\gamma^\rho_i(t), t \in [0,1]$. This too is called a \textit{standard parametrization} and we call the $\gamma^\rho_i$ \textit{equidistant curves}.\\
We extend this to degenerate Y-pieces with cusps writing symbolically `$\ell(\gamma_i)=0$', if the $i$-th end is a puncture. In this case the equidistant curves are horocycles and the collar $\mathcal{C}_i$ is isometric to the surface
\[
    \mathcal{P} = \{ z \in \Hy \mid  \im(z) > \frac{1}{2}\} \mod [z \mapsto z+1],
\]
where $\Hy$ is the hyperbolic plane with the metric
\begin{equation}
    d s^2 = \frac{1}{y^2}(dx^2 + dy^2)
\label{eq:uhp_metric}
\end{equation}
and $[z \mapsto z+1]$ denotes the cyclic group generated by the mapping $m(z) = z+1$.\\
We also consider \textit{restricted Y-pieces}, where parts of the collars have been cut off: Take $Y_{l_1,l_2,l_3}$, select in each $\mathcal{C}_i$ an equidistant curve (respectively, horocycle) $\beta_i$ of length $\lambda_i$, possibly $\beta_i= \gamma_i$, and cut away the outer part of the collar along this curve. This \textit{restricted Y-piece} $Y^{\lambda_1,\lambda_2,\lambda_3}_{l_1,l_2,l_3}$ is the closure of the connected component of $Y_{l_1,l_2,l_3} \backslash \{\beta_1,\beta_2,\beta_3\}$ that has signature $(0,3)$.\\
A homeomorphism
\[
   \phi: Y \rightarrow Y'
\]
of, possibly restricted, Y-pieces is called \textit{boundary coherent} if for corresponding boundary curves $\alpha_i$ of $Y$ and $\alpha'_i$ of $Y'$ in standard parametrization one has $\phi(\alpha_i(t))=\alpha'_i(t),t \in [0,1]$.\\
In the following we denote by $Y^c_{l_1,l_2}$ a surface  $Y^{l_1,l_2,c}_{l_1,l_2,0}$.

\begin{thm} Let $0 \leq l_1,l_2$, $0 < \epsilon \leq \frac{1}{2}$, and set $\epsilon^* = \frac{2}{\pi} \epsilon$. Then there exists a boundary coherent quasiconformal homeomorphism
\[
   \phi : Y_{l_1,l_2,\epsilon} \rightarrow Y^{\epsilon^*}_{l_1,l_2}
\]
with dilatation $q_{\phi} \leq 1+ 2\epsilon^2$.
\label{thm:main}
\end{thm}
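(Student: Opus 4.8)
\emph{Plan of proof.}
The idea is to split both pieces along the degenerating end. In $Y_{l_1,l_2,\epsilon}$ cut away the standard collar $\mathcal{C}_3$ of $\gamma_3$; its complement is the restricted Y-piece $\mathcal{Y}=Y^{l_1,l_2,\mu}_{l_1,l_2,\epsilon}$ with $\mu=\ell(\gamma_3^{\omega_3})=\epsilon\cosh\omega_3=\epsilon\coth\tfrac\epsilon2$, an honest pair of pants with boundary $\gamma_1,\gamma_2,\gamma_3^{\omega_3}$. In $Y^{\epsilon^*}_{l_1,l_2}=Y^{l_1,l_2,\epsilon^*}_{l_1,l_2,0}$ the collar $\mathcal{C}_3$ of the cusp is the maximal cusp neighbourhood $\mathcal{P}$, bounded by the horocycle $h$ of length $2$; write $\mathcal{A}$ for the part of $\mathcal{P}$ between $\beta_3$ and $h$, and $\mathcal{Y}'=Y^{l_1,l_2,2}_{l_1,l_2,0}$ for the complement. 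We construct $\phi$ separately on $\mathcal{C}_3\!\to\!\mathcal{A}$ and on $\mathcal{Y}\!\to\!\mathcal{Y}'$, arranged so that both respect the standard parametrizations of the shared curve $\gamma_3^{\omega_3}\leftrightarrow h$; they then patch to a boundary coherent homeomorphism with $q_\phi=\max$ of the two dilatations.

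\emph{The collar part.}
In the Fermi coordinates $(\rho,t)$ of $\mathcal{C}_3$ the hyperbolic metric is $d\rho^2+\epsilon^2\cosh^2\!\rho\,dt^2$ with $t\in\Sp^1$ the standard parameter of each $\gamma_3^\rho$; passing to the conformal parameter $\tau=\tfrac1\epsilon\int_0^\rho\!\frac{d\rho'}{\cosh\rho'}$ realizes $\mathcal{C}_3$ as the flat cylinder $[0,M]\times\Sp^1$ with, using $e^{\omega_3}=\coth\tfrac\epsilon4$,
\[
  M=\tfrac1\epsilon\bigl(2\arctan e^{\omega_3}-\tfrac\pi2\bigr)=\tfrac1\epsilon\bigl(\tfrac\pi2-2\arctan\tanh\tfrac\epsilon4\bigr).
\]
The analogous computation for $\mathcal{A}$, with the conformal parameter $\tfrac1{\epsilon^*}(1-e^{-\sigma})$ of the horocyclic distance $\sigma$, gives the flat cylinder $[0,M']\times\Sp^1$ with $M'=\tfrac1{\epsilon^*}-\tfrac12=\tfrac\pi{2\epsilon}-\tfrac12$. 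This is precisely where the factor $\tfrac2\pi$ enters: one has $M>M'$ and
\[
  \frac{M}{M'}=1+\frac{\tfrac\epsilon2-2\arctan\tanh\tfrac\epsilon4}{\tfrac\pi2-\tfrac\epsilon2}=1+O(\epsilon^3).
\]
The affine map $(\tau,t)\mapsto(\tfrac{M'}{M}\tau,t)$ is boundary coherent on $\gamma_3\leftrightarrow\beta_3$ and on $\gamma_3^{\omega_3}\leftrightarrow h$, and its dilatation is $M/M'\le 1+2\epsilon^2$.

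\emph{The thick part.}
Cutting $\mathcal{Y}$ and $\mathcal{Y}'$ along the three common perpendiculars of their boundary curves splits each into two isometric copies of a right-angled hexagon. For $\mathcal{Y}$ this hexagon $H_\epsilon$ has sides $\tfrac{l_1}2,\,s_{12}(\epsilon),\,\tfrac{l_2}2,\,s_{23}(\epsilon)-\omega_3,\,\tfrac\mu2,\,s_{31}(\epsilon)-\omega_3$, the fifth being an arc of $\gamma_3^{\omega_3}$; for $\mathcal{Y}'$ it is the corresponding $H_0$. From the right-angled hexagon identities — notably $\cosh s_{12}(\epsilon)=\dfrac{\cosh\tfrac\epsilon2+\cosh\tfrac{l_1}2\cosh\tfrac{l_2}2}{\sinh\tfrac{l_1}2\sinh\tfrac{l_2}2}$ with $\cosh\tfrac\epsilon2=1+\tfrac{\epsilon^2}8+O(\epsilon^4)$ — together with the $\epsilon\to0$ expansions of $\omega_3$, of $s_{23}(\epsilon)-\omega_3$ and $s_{31}(\epsilon)-\omega_3$, and of $\mu$, one checks that every side of $H_\epsilon$ differs from the corresponding side of $H_0$ only by a factor $1+O(\epsilon^2)$, uniformly in $l_1,l_2\ge0$. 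One then prescribes a homeomorphism $H_\epsilon\to H_0$ which is the identity on the $\gamma_1$- and $\gamma_2$-sides, affine in the natural parameters on the remaining four sides, and interpolated in the interior so as to keep the dilatation under control; gluing the two copies yields $\phi|_{\mathcal{Y}}$, boundary coherent on $\gamma_1,\gamma_2$ and, by the choice of parameter on the arc, matching the collar map along $\gamma_3^{\omega_3}$. Its dilatation is governed by the $O(\epsilon^2)$ discrepancies of the hexagon data.

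\emph{Conclusion, and the main difficulty.}
Patching the two maps along $\gamma_3^{\omega_3}$ gives the required $\phi$, and collecting the two estimates gives $q_\phi\le1+2\epsilon^2$. If $l_1=0$ and/or $l_2=0$ the hexagon $H_\epsilon$ acquires one or two ideal vertices (the corresponding collar $\mathcal{C}_1$ or $\mathcal{C}_2$ becoming the standard cusp $\mathcal{P}$), and the argument is unchanged. The collar step is an honest one-dimensional computation and the trigonometric comparison of $H_\epsilon$ with $H_0$ is routine; the real content — and the only place where anything could go wrong — is the third step, i.e.\ producing the quasiconformal map between the two nearby hexagons so that it simultaneously carries the prescribed boundary parametrizations, fits the model collar map along $\gamma_3^{\omega_3}$, and has dilatation bounded not merely by $1+C\epsilon^2$ but by $1+2\epsilon^2$, uniformly in $l_1,l_2$. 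The cleanest way to handle the last point is to transport both hexagons to a single fixed model region by maps whose boundary behaviour one controls, and to write $\phi|_{\mathcal{Y}}$ there as an explicit perturbation of the identity whose partial derivatives are visibly within $1+2\epsilon^2$ of their undeformed values.
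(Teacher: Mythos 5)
Your overall strategy --- split off the thin end of $\gamma_3$, treat it by an essentially conformal cylinder map (which is indeed where the factor $\frac{2}{\pi}$ comes from), and deform the remaining thick part by a map close to the identity --- parallels the paper's, and your collar computation is sound: it reproduces, with the cut placed at the full collar width $\omega_3$ instead of the paper's $w=\log(2/\epsilon)$, the paper's conformal map $F^\beta$ followed by the affine vertical correction $G^\beta$ with dilatation $k_\epsilon = 1+O(\epsilon^3)$.

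The genuine gap is in the thick part, and you name it yourself: the quasiconformal homeomorphism $H_\epsilon \to H_0$ between the two nearby hexagons is never constructed. Knowing that corresponding side lengths differ by factors $1+O(\epsilon^2)$ does not by itself produce a homeomorphism that simultaneously (a) restricts to the prescribed constant-speed parametrizations on all boundary arcs, (b) matches the cylinder map along $\gamma_3^{\omega_3}$, and (c) has dilatation $\leq 1+2\epsilon^2$ uniformly for $l_1,l_2 \in [0,\infty)$ including the degenerate cases; ``interpolated in the interior so as to keep the dilatation under control'' is a placeholder for the entire technical core of the theorem. This is precisely what Sections~\ref{sec:proofthm3} and~\ref{sec:estimates} of the paper supply: the map is written in Fermi coordinates based on the perpendicular $c$ from $\gamma_1$ to $\gamma_2$ as an explicit piecewise horizontal compression $F_\beta$ whose displacement is controlled through the pentagon formulas \eqref{eq:pent_formulas} (the key quantitative input being $\delta_i/\eta_i \leq \frac13\epsilon^2$ in \eqref{eq:upperbound_deltaietai}), followed by two further explicit corrections $G_\beta$ and $H_\beta$ that straighten the image of the cut curve into a constant-speed horizontal line; each correction in turn requires the estimates of \textbf{Lemma~\ref{thm:lemma1}} on the image curve $\tilde\beta$, whose proof occupies most of Section~\ref{sec:estimates}. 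Until you exhibit such a map of the hexagons together with these estimates, the claimed bound $q_\phi\leq 1+2\epsilon^2$ --- and even a bound $1+C\epsilon^2$ with an unspecified constant uniform in $l_1,l_2$ --- is not established.
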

This theorem is illustrated in \textit{Figure 1}. Note that the bound is independent of $l_1$ and $l_2$.

Since the Y-pieces in \textbf{Theorem~\ref{thm:main}} are allowed to be degenerate we may apply the theorem twice, so as to get the following.

\begin{figure}[h!]
\SetLabels
\L(.17*.60) $Y_{l_1,l_2,\epsilon}$\\
\L(.75*.60) $Y^{\epsilon^*}_{l_1,l_2}$\\
\L(.49*.65) $\phi$\\
\L(.10*.35) $\,\gamma_1(0)$\\
\L(.33*.80) $\gamma_3(0)$\\
\L(.38*.03) $\gamma_2(0)$\\
\L(.51*.35) $\gamma'_1(0)$\\
\L(.72*.74) $\,h_{\epsilon^*}$\\
\L(.80*.03) $\gamma'_2(0)$\\
\endSetLabels
\AffixLabels{%
\centerline{%
\includegraphics[height=5cm,width=12cm]{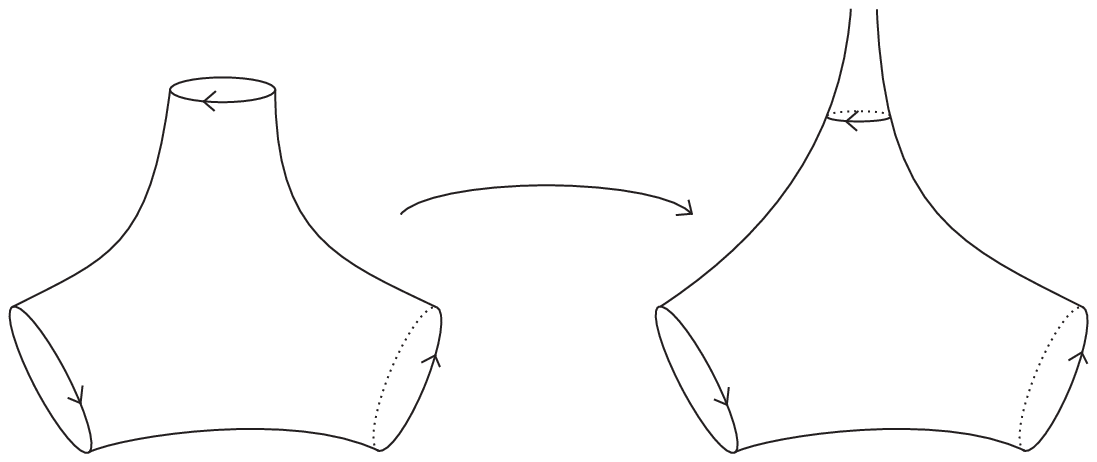}}}
\caption{$Y_{l_1,l_2,\epsilon}$ is quasiconformally embedded into the Y-piece $Y_{l_1,l_2,0}$ with a cusp. The boundary geodesic $\gamma_3$ is sent to the horocycle $h_{\epsilon^*}$ of length $\epsilon^*$.}
\label{fig:theorem1}
\end{figure}

\begin{cor} Let $l_1 \geq 0$ and $0 < \epsilon_2, \epsilon_3 \leq \frac{1}{2}$, and set $\epsilon_i^* = \frac{2}{\pi} \epsilon_i$, $i \in \{2,3\}$. Then there exists a boundary coherent quasiconformal homeomorphism
\[
   \phi : Y_{l_1,\epsilon_2,\epsilon_3} \rightarrow Y^{l_1,\epsilon_2^*, \epsilon_3^*}_{l_1,0,0}
\]
with dilatation $q_{\phi} \leq (1+ 2\epsilon_2^2)(1+ 2\epsilon_3^2) $. 
\label{cor:cor1}
\end{cor}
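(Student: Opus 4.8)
The plan is to apply \textbf{Theorem~\ref{thm:main}} twice, contracting the two short boundary geodesics $\gamma_2$ and $\gamma_3$ of $Y_{l_1,\epsilon_2,\epsilon_3}$ one after the other, and then to bound the dilatation of the composition by the product of the two individual dilatations (recall that the composition of a $K_1$-quasiconformal and a $K_2$-quasiconformal map is $K_1K_2$-quasiconformal).

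\emph{Step 1 (contract $\gamma_3$).} Apply \textbf{Theorem~\ref{thm:main}} with $(l_1,l_2,\epsilon)=(l_1,\epsilon_2,\epsilon_3)$. This yields a boundary coherent quasiconformal homeomorphism $\phi_1:Y_{l_1,\epsilon_2,\epsilon_3}\to Y^{\epsilon_3^*}_{l_1,\epsilon_2}=Y^{l_1,\epsilon_2,\epsilon_3^*}_{l_1,\epsilon_2,0}$ with $q_{\phi_1}\le 1+2\epsilon_3^2$, which sends $\gamma_3$ to a horocycle $h_{\epsilon_3^*}$ of length $\epsilon_3^*$ and, being boundary coherent, keeps $\gamma_1$ and $\gamma_2$ (with their standard parametrizations) in place. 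The image $Y^{l_1,\epsilon_2,\epsilon_3^*}_{l_1,\epsilon_2,0}$ is obtained from the degenerate Y-piece $Y_{l_1,\epsilon_2,0}$ by cutting its third (cusp) end off along $h_{\epsilon_3^*}$; near $\gamma_1$ and $\gamma_2$ it agrees with $Y_{l_1,\epsilon_2,0}$.

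\emph{Step 2 (contract $\gamma_2$).} A Y-piece is determined by its boundary lengths, so the labelling of its ends may be permuted; in particular a cyclic relabelling $(\gamma_1,\gamma_2,\gamma_3)\mapsto(\gamma_3,\gamma_1,\gamma_2)$ preserves the cyclically defined standard parametrization. Applying \textbf{Theorem~\ref{thm:main}} to $Y_{l_1,\epsilon_2,0}$ with this relabelling (so that $\epsilon_2$ plays the role of $\epsilon$ and the cusp the role of $l_1$, which is permitted since $l_1=0$ is allowed) produces a boundary coherent quasiconformal homeomorphism $\phi_2:Y_{l_1,\epsilon_2,0}\to Y^{l_1,\epsilon_2^*,0}_{l_1,0,0}$ with $q_{\phi_2}\le 1+2\epsilon_2^2$ that contracts $\gamma_2$ to a horocycle of length $\epsilon_2^*$ and leaves $\gamma_1$ and the third cusp fixed in standard, resp. horocyclic, coordinates. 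In particular $\phi_2$ is the identity on a neighbourhood of the third cusp, hence it maps $h_{\epsilon_3^*}$ onto the horocycle of length $\epsilon_3^*$ in the target; it therefore restricts to a homeomorphism $Y^{l_1,\epsilon_2,\epsilon_3^*}_{l_1,\epsilon_2,0}\to Y^{l_1,\epsilon_2^*,\epsilon_3^*}_{l_1,0,0}$.

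Now set $\phi:=\phi_2\circ\phi_1:Y_{l_1,\epsilon_2,\epsilon_3}\to Y^{l_1,\epsilon_2^*,\epsilon_3^*}_{l_1,0,0}$. It is a homeomorphism, it is boundary coherent (both factors are, and $\phi_2$ fixes $h_{\epsilon_3^*}$ pointwise in its standard parametrization), and $q_\phi\le q_{\phi_1}q_{\phi_2}\le(1+2\epsilon_2^2)(1+2\epsilon_3^2)$, as desired. The one point requiring care — and the only real obstacle — is the compatibility claim in Step 2: that the homeomorphism furnished by \textbf{Theorem~\ref{thm:main}} can be taken to coincide with the identity, in the standard collar (resp.\ horocyclic) coordinates, on a full neighbourhood of every boundary end it does not contract, so that $\phi_2$ does not disturb the horocycle produced by $\phi_1$ and the two maps glue with controlled dilatation. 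This should be visible from the explicit construction behind \textbf{Theorem~\ref{thm:main}}; alternatively one carries out both applications on the full (unrestricted) Y-pieces and passes to the restricted pieces only at the end.
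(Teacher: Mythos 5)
Your two-step plan---contract $\gamma_3$, cyclically relabel, contract $\gamma_2$, compose and multiply dilatations---is exactly the route the paper takes. But the obstacle you flag at the very end is a genuine gap, and it is precisely the point where the paper invokes not Theorem~\ref{thm:main} itself but its refinement, Theorem~\ref{thm:mainrefined}. Boundary coherence, which is all Theorem~\ref{thm:main} promises, only controls $\phi_2$ on the boundary curves themselves; it says nothing about what $\phi_2$ does to an interior horocycle like $h_{\epsilon_3^*}$, so by itself it does not justify your assertion that ``$\phi_2$ is the identity on a neighbourhood of the third cusp''. The missing ingredient is Theorem~\ref{thm:mainrefined}(ii) and especially (iv): the restriction of $\phi$ to the reduced collars $\hat{\mathcal{C}}_1$, $\hat{\mathcal{C}}_2$ (around the two ends not being contracted) is an isometry. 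Under your cyclic relabelling, this becomes an isometry on the reduced collar $\hat{\mathcal{C}}_3$ of the cusp of $Y_{l_1,\epsilon_2,0}$. There is also a second point you do not address but which is needed to use this: since $\epsilon_3 \leq \tfrac12$ one has $\epsilon_3^* = \tfrac{2}{\pi}\epsilon_3 < 1$, so $h_{\epsilon_3^*}$ really does lie inside $\hat{\mathcal{C}}_3$ (which, for a cusp, is by definition the region beyond the horocycle of length $1$); only then does the isometry statement say anything about $h_{\epsilon_3^*}$. Your fallback---composing on the unrestricted degenerate Y-pieces and restricting at the end---does not circumvent this, since one still has to know where $\phi_2$ sends $h_{\epsilon_3^*}$ in order to identify the image of the composite as $Y^{l_1,\epsilon_2^*,\epsilon_3^*}_{l_1,0,0}$. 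So the structure of your argument is right, but it is not ``visible from the construction'' without the additional work that the paper packages into Theorem~\ref{thm:mainrefined}.
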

An enhanced version of \textbf{Theorem~\ref{thm:main}}, the proof of \textbf{Corollary~\ref{cor:cor1}} and some further corollaries will be discussed in Section~\ref{sec:concluding}.\\

The construction of $\phi$ is carried out in Section~\ref{sec:proofthm3}, some technical computations are postponed to Section~\ref{sec:estimates}. 
Throughout the paper we use elementary inequalities for functions in one variable without giving proofs, as e.g. in \eqref{eq:displace_estimate}. They may be checked by plotting the functions out and proved using Taylor polynomials.

\section{Proof of Theorem \ref{thm:main}}\label{sec:proofthm3}
For ease of exposition we restrict ourselves to the case $l_1,l_2 > 0$. The modifications for the remaining cases are straightforward. \\
\\
The common perpendiculars $d_1$ and $d_2$ from $\gamma_3$ to $\gamma_1$ and $\gamma_2$, respectively, together with the common perpendicular $c$ from $\gamma_1$ to $\gamma_2$ decompose $Y_{l_1,l_2,\epsilon}$ into two isometric right-angled geodesic hexagons $\mathcal{H}$ and $\tilde{\mathcal{H}}$. Similarly $Y_{l_1,l_2,0}$ is decomposed into two degenerate hexagons $\mathcal{H}'$ and $\tilde{\mathcal{H}}'$.\\
We shall construct $\phi: \mathcal{H} \rightarrow  \mathcal{H}'$. Extending it symmetrically to $\tilde{\mathcal{H}}$ we then get
\[
\phi: Y_{l_1,l_2,\epsilon} \rightarrow Y^{\epsilon^*}_{l_1,l_2}.
\]
\textit{Figure 2} represents $\mathcal{H}$ and $\mathcal{H}'$ drawn in Fermi coordinates based on the geodesic through $c$, respectively the corresponding perpendicular $c'$ of $\mathcal{H}'$. Thus, sides $c$ and $c'$ lie on the horizontal axis of the coordinate system.
\begin{figure}[h!]
\SetLabels
\L(.09*.02) $C_1$\\
\L(.16*.02) $H_1$\\
\L(.22*.02) $D_1$\\
\L(.26*.02) $D_0$\\
\L(.30*.02) $D_2$\\
\L(.37*.02) $H_2$\\
\L(.44*.02) $C_2$\\
\L(.07*.23) $\alpha_1$\\
\L(.46*.23) $\alpha_2$\\
\L(.07*.41) $A_1$\\
\L(.46*.41) $A_2$\\
\L(.21*.47) $\beta$\\
\L(.13*.53) $\,\,B_1$\\
\L(.39*.53) $B_2$\\
\L(.19*.69) $w$\\
\L(.35*.69) $w$\\
\L(.28*.64) $\lambda$\\
\L(.20*.86) $E_1$\\
\L(.27*.88) $\,\,E_0$\\
\L(.33*.86) $E_2$\\
\L(.57*.02) $C'_1$\\
\L(.64*.02) $H'_1$\\
\L(.71*.02) $D'_0$\\
\L(.78*.02) $H'_2$\\
\L(.85*.02) $C'_2$\\
\L(.55*.23) $\alpha_1$\\
\L(.87*.23) $\alpha_2$\\
\L(.55*.41) $A'_1$\\
\L(.87*.41) $A'_2$\\
\L(.64*.48) $\,\,\, \phi_\beta \circ \beta$\\
\L(.61*.53) $\,B'_1$\\
\L(.80*.53) $B'_2$\\
\endSetLabels
\AffixLabels{%
\centerline{%
\includegraphics[height=7.5cm,width=14cm]{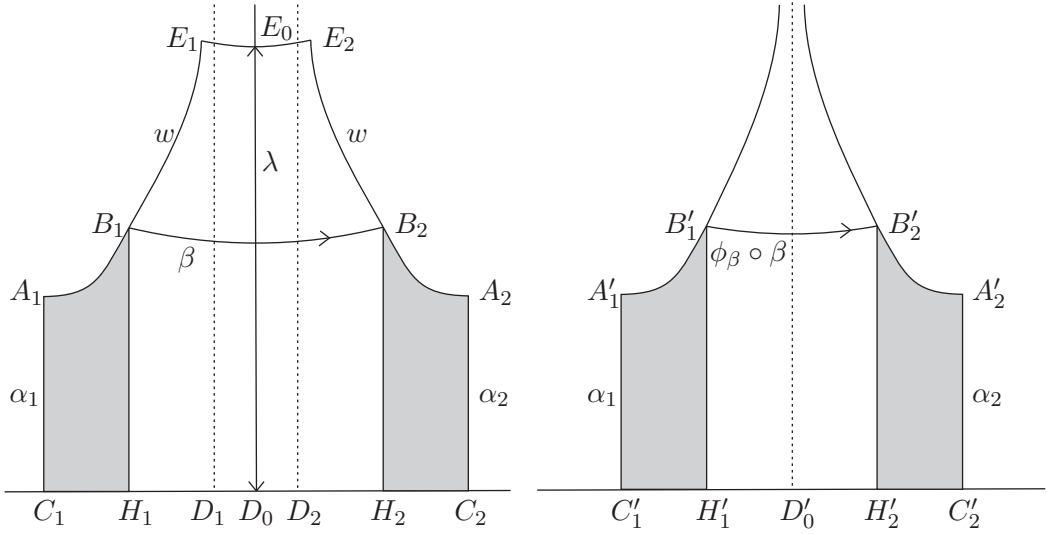}}}
\caption{The geodesic hexagons $\mathcal{H}$ and $\mathcal{H}'$ in Fermi coordinates. $\mathcal{H}'$ is degenerate.}
\label{fig:figure2}
\end{figure}
The sides of $\mathcal{H}$ are in this order $\alpha_1,c,\alpha_2,d_2,\alpha_3,d_1$. With these symbols we denote also the lengths of these curves and we have $\alpha_i= \frac{1}{2} \ell(\gamma_i), i \in \{1,2,3\}$. The vertices are labelled as in \textit{Figure 2} such that
\[
   A_1C_1= \alpha_1, \text{ \ \ } C_1C_2=c, \text{ \ \ } C_2A_2=\alpha_2, \text{ \ \ } A_2E_2=d_2, \text{ \ \ } E_2E_1= \alpha_3 \text{ \ and \ } E_1A_1=d_1.
\]
The points $B_i$ are the endpoints of a (non geodesic) arc $\beta$ that is equidistant to $\alpha_3$ at some distance $w$ that will be introduced in (\ref{eq:define_w}). The points $H_i$ are the endpoints on $c$ of the perpendiculars from $B_i$ to $c$. Finally, $\lambda=E_0D_0$ is the common perpendicular of the sides $\alpha_3$ and $c$. It splits $\mathcal{H}$ into two right-angled geodesic pentagons $\mathcal{P}_i$, $i \in \{1,2\}$, with sides
\begin{equation*}
\alpha_i=A_iC_i, \text{ \ \ } c_i:= C_iD_0, \text{ \ \ } \lambda = D_0E_0, \text{ \ \ } \epsilon_i:= E_0E_i \text{ \ and \ } d_i=E_iA_i.
\label{*eq:pent_points}
\end{equation*}
The vertical, dotted line with endpoint $D_i$ on $c$, $i \in \{1,2\}$, is the vertical geodesic that meets the geodesic line through $A_i$ and $E_i$ at infinity, thus defining a degenerate geodesic pentagon $\mathcal{Q}_i$ with vertices $\infty, A_i,C_i, D_i$ and the two sides of finite length
\[
\alpha_i = A_iC_i \text{ \ and \ } c'_i:= C_iD_i.
\]
For the pentagons the following formulas hold (see \cite{bu}, p. 454).
\begin{eqnarray}
\sinh(\epsilon_i)\sinh(\lambda)&=&\cosh(\alpha_i)   \nonumber \\
\sinh(\alpha_i)\sinh(c_i)&=&\cosh(\epsilon_i) \label{eq:pent_formulas} \\
\sinh(\alpha_i)\sinh(c'_i)&=&1. \nonumber
\end{eqnarray}
The last identity is the same as
\begin{equation}
\sinh(\alpha_i)\cosh(c'_i)=\cosh(\alpha_i).
\label{eq:pent_form2}
\end{equation}
The hexagon $\mathcal{H}'$ on the right-hand side of \textit{Figure 2} has vertices $\infty,A'_1,C'_1,C'_2,A'_2$. The vertical geodesic from $D'_0$ on $c'=C'_1C'_2$ to the point $\infty$ splits $\mathcal{H}'$ into two degenerate pentagons $\mathcal{Q}'_i$. By the last formula in (\ref{eq:pent_formulas}), we have
\[
    C_iD_i =  C'_iD'_0 = c'_i.
\]
Hence, $\mathcal{Q}_i$ and $\mathcal{Q}'_i$ are isometric. \\
In Fermi coordinates horizontal translations are isometries. Hence, $\mathcal{Q}'_i$ is obtained by translating $\mathcal{Q}_i$ horizontally. The displacement length along the horizontal axis for this, in absolute values, is
\begin{equation*}
\delta_i:= D_iD_0.
\label{*eq:displacement}
\end{equation*}
The $\delta_i$ are much smaller than the $\epsilon_i$: using that
\[
   \sinh(c_i) - \sinh(c'_i) =  \sinh(c'_i + \delta_i) - \sinh(c'_i) \geq \delta_i \cosh(c'_i)
\]
we get, using (\ref{eq:pent_formulas}) and (\ref{eq:pent_form2}),
\begin{equation}
\delta_i \leq \frac{\cosh(\epsilon_i)-1}{\cosh(\alpha_i)} \leq \frac{\sinh(\epsilon_i)^2}{2\cosh(\alpha_i)}.
\label{eq:displace_estimate}
\end{equation}
The idea is therefore to map $\mathcal{P}_i$ to $\mathcal{Q}'_i$ by `compressing it horizontally'. However, such a mapping has a small dilatation only up to a certain height. Therefore, we separate $\mathcal{H}$ into two parts $\mathcal{H}_\beta$ and $\mathcal{H}^\beta$ along the arc $\beta$ from $B_1$ to $B_2$ that is equidistant to $\alpha_3$ (see \textit{Figure 2}) at distance $w$, where the following value has proven to be practical:
\begin{equation}
w:=\log \left(\frac{2}{\epsilon}\right).
\label{eq:define_w}
\end{equation}
Since
\[
\sinh(w+ \log(2)) \cdot \sinh(\frac{1}{2} \ell(\gamma_3)) = \frac{1}{2}\left(\frac{4}{\epsilon} - \frac{\epsilon}{4}\right) \sinh(\frac{\epsilon}{2}) < 1
\]
the line $\beta$ lies in the collar $\mathcal{C}_3$ of $\gamma_3$. 

Hence, the endpoints of $\beta$ lie on the sides $d_1$ and $d_2$. Therefore $\beta$ does not intersect side $c$, as drawn in \textit{Figure 2} (i.e. the figure is `correct'). $\mathcal{H}_\beta$ is the part of $\mathcal{H}$ below $\beta$, $\mathcal{H}^\beta$ the part above. \\
We describe the `compression' of $\mathcal{H}_\beta$. First we set
\begin{equation}
\eta_i:= H_iD_0, \text{ \ for \ } i \in \{1,2\}.
\label{eq:etai}
\end{equation}
This $\eta_i$ will play an important role in our construction. Let now $p$ be any point in $\mathcal{H}_\beta$  with Fermi coordinates $(t,r)$, where $t$ is the unit speed parameter of $c$  and $r$ is the distance from $p$ to $c$.  The horizontal positions of $\mathcal{H}$ and $\mathcal{H}'$ are such that $D_0 = c(0) = D'_0$. We define a mapping $F_\beta: \mathcal{H}_\beta \rightarrow \mathcal{H}'_\beta$ given in Fermi coordinates by
\begin{equation}
\begin{gathered}
F_\beta(t,r):= (t',r) \text{ \ \ with \ }  \\
t'=
\left\{ {\begin{array}{*{20}c}
   t+\delta_1  \\
   (1-\frac{\delta_1}{\eta_1}) t  \\
    (1-\frac{\delta_2}{\eta_2}) t  \\
    t-\delta_2  \\
\end{array}} \right.\text{ \ if \ }
\begin{array}{*{20}c}
   t \leq -\eta_1  \\
   -\eta_1 \leq t \leq 0  \\
   0 \leq t \leq \eta_2 \\
   t \geq \eta_2. \\
\end{array}
 \label{eq:Fbeta1}
\end{gathered}
\end{equation}
On the quadrilaterals $B_iA_iC_iH_i$ (shaded areas in \textit{Figure 2}) $F_\beta$ acts isometrically. From the expression of the hyperbolic metric in Fermi coordinates
\begin{equation}
ds^2 = \cosh(r)^2 dt^2 + dr^2
\label{eq:Fermi_metric}
\end{equation}
it follows that $F_\beta$ is quasiconformal with dilatation
\[
   q_{F_\beta} = \frac{1}{1-\frac{\delta_i}{\eta_i}} \text{ \ on \ } \mathcal{H}_{\beta} \cap\mathcal{P}_i , \ i = 1,2.
\]
In fact, $F_\beta$ also has length distortion $\leq q_{F_\beta}$ (see the introduction to \textbf{Lemma~\ref{thm:lemma2}} for the type of length distortion addressed in this paper). In (\ref{eq:upperbound_deltaietai}) we shall see that $\frac{\delta_i}{\eta_i} < \frac{1}{3} \epsilon^2$ so that
\begin{equation}
q_{F_\beta} <  \frac{1}{1-\frac{1}{3} \epsilon^2}.
\label{eq:dilatation_estimate}
\end{equation}
On $\mathcal{H}^\beta$ we shall define a mapping $F^\beta$ that maps equidistant curves of $\alpha_3$ to horocycles. To this end we have to add a modification to $F_\beta$ so that $\beta$ is mapped to a horocycle. Both constructions will be carried out in the upper half plane model of the hyperbolic plane
\[
   \Hy = \{ z = x+iy \in \C \mid y > 0 \}
\]
with the metric (\ref{eq:uhp_metric}), which represents the horocycles nicely. \\
\\
A useful tool for working in $\Hy$ are the isometries $\up(s,\cdot)$ and $\rgh(s,\cdot)$ (`move up' and `move to the right'), defined as Moebius transformations: for $z \in \Hy, s \in \R$,
\begin{equation}
   \up(s,z):= e^s \cdot z, \text{ \ \ \ }  \rgh(s,z):= \frac{\cosh(\frac{s}{2}) \cdot z + \sinh(\frac{s}{2})}{\sinh(\frac{s}{2}) \cdot z + \cosh(\frac{s}{2})}.
\label{eq:define_up_rgh}
\end{equation}
These are hyperbolic isometries with displacement length $|s|$. The axis of $\up(s,\cdot)$ is the positive imaginary axis, the axis of $\rgh(s,\cdot)$ is the geodesic $\Gamma$, passing through $i$ with endpoints at infinity $-1$ and $1$. Its unit speed parametrization is
\begin{equation}
\Gamma(t) = \rgh(t,i) = \tanh(t) + \frac{i}{\cosh(t)}, t \in \R.
\label{eq:define_Gamma}
\end{equation}
The Fermi coordinates $(t,r)$ based on $\Gamma$ of a point $z= x+iy \in \Hy$ are computed as $(t,r) =(\tau(z),\rho(z))$ with the functions
\begin{eqnarray}
\tau(z) &=& \arctanh\left( \frac{2x}{x^2 + y^2 +1}\right) \nonumber \\
\rho(z) &=& \arcsinh \left( \frac{x^2 + y^2 -1}{2y} \right). \label{eq:Fermi_Gamma1}
\end{eqnarray}
Conversely we have
\begin{equation}
z = \frac{\sinh(t)\cosh(r)+i}{\cosh(t)\cosh(r) - \sinh(r)}.
\label{eq:Fermi_Gamma2}
\end{equation}
\begin{proof}
Equation (\ref{eq:Fermi_Gamma2}) is obtained by observing that if $z$ has Fermi coordinates $(t,r)$ then $z= \rgh(t,\up(r,i))$. For (\ref{eq:Fermi_Gamma1}) we plug (\ref{eq:Fermi_Gamma2}) into the right hand side.
\end{proof}
\begin{figure}[h!]
\SetLabels
\L(.11*.02) $-1$\\
\L(.24*.02) $\,\,0$\\
\L(.37*.02) $1$\\
\L(.83*.18) $\,\Gamma$\\
\L(.10*.17) $\,\,\alpha_1$\\
\L(.38*.18) $\alpha_2$\\
\L(.06*.18) $\,A_1$\\
\L(.41*.18) $\,A_2$\\
\L(.145*.135) $\,C_1$\\
\L(.33*.135) $\,C_2$\\
\L(.18*.24) $\,H_1$\\
\L(.30*.24) $H_2$\\
\L(.19*.33) $\eta_1$\\
\L(.29*.33) $\,\eta_2$\\
\L(.26*.34) $i$\\
\L(.17*.54) $\beta$\\
\L(.05*.56) $B_1$\\
\L(.42*.56) $\,B_2$\\
\L(.26*.61) $ie^{\lambda -w}$\\
\L(.08*.70) $\,w$\\
\L(.40*.70) $\,w$\\
\L(.18*.84) $\epsilon_1$\\
\L(.32*.84) $\epsilon_2$\\
\L(.03*.84) $E_1$\\
\L(.26*.90) $E_0=ie^\lambda$\\
\L(.45*.84) $E_2$\\
\L(.12*.90) $\alpha_3$\\
\L(.61*.02) $-1$\\
\L(.74*.02) $\,\,0$\\
\L(.87*.02) $1$\\
\L(.60*.17) $\,\alpha_1$\\
\L(.87*.18) $\alpha_2$\\
\L(.56*.19) $A'_1$\\
\L(.91*.18) $A'_2$\\
\L(.68*.24) $H'_1$\\
\L(.79*.24) $\,H'_2$\\
\L(.75*.34) $\,i$\\
\L(.65*.54) $F_\beta \circ \beta$\\
\L(.56*.56) $B'_1$\\
\L(.91*.56) $\,B'_2$\\
\L(.75*.61) $\,2ia$\\
\endSetLabels
\AffixLabels{%
\centerline{%
\includegraphics[height=7.5cm,width=15cm]{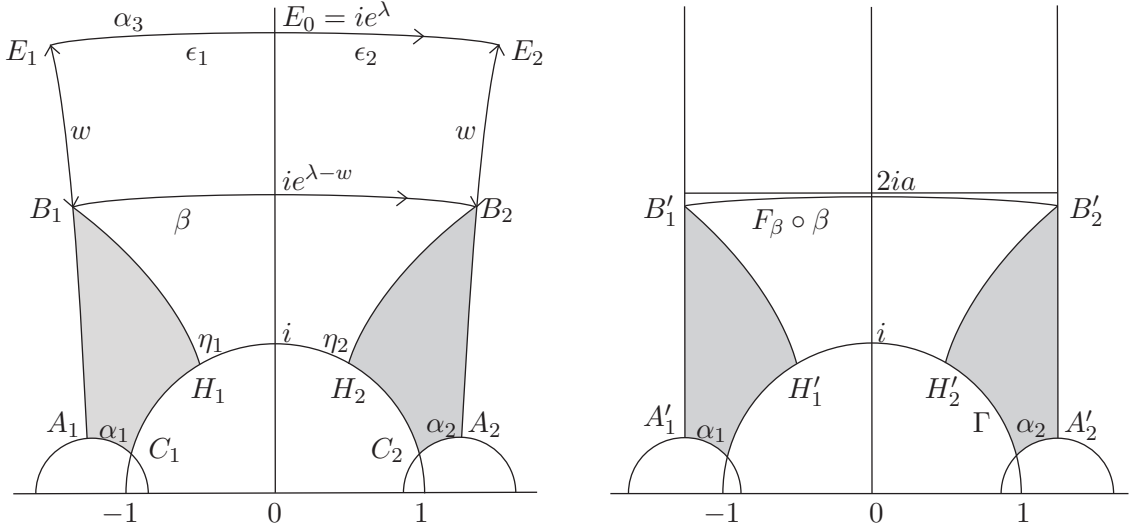}}}
\caption{The geodesic hexagons $\mathcal{H}$ and $\mathcal{H}'$ in the upper half plane $\Hy$. The coordinates of $A_1'$ and $A_2'$ satisfy Equation \eqref{eq:A1A2prime}}.
\label{fig:figure3}
\end{figure}
\textit{Figure 3} shows the hexagons $\mathcal{H}$ and $\mathcal{H}'$ drawn in $\Hy$ with sides $c$ and $c'$ on $\Gamma$ and $D_0=i$, respectively, $D'_0=i$. The geodesic through $D_0\infty$ coincides with the positive imaginary axis. Since it meets the geodesics through $A'_1B'_1$ and through $A'_2B'_2$ at infinity, the latter are vertical straight lines, as drawn in \textit{Figure 3}. Since
\[
  A'_1= \rgh(-c'_1,\up(\alpha_1,i)) \text{ \ and \ } A'_2= \rgh(c'_2,\up(\alpha_2,i))
\]
and as by (\ref{eq:pent_formulas}) $\sinh(\alpha_i)\sinh(c'_i) = 1$, we have
\begin{equation}
 A'_1=-\cosh(\alpha_1) + i \sinh(\alpha_1) \text{ \ and \ }  A'_2=\cosh(\alpha_2) + i \sinh(\alpha_2).
 \label{eq:A1A2prime}
 \end{equation}
 A parametrization of $\beta$ may be obtained as follows: The isometry $m(s,\cdot) = \up(\lambda,\cdot) \circ \rgh(s,\cdot) \circ \up(-\lambda,\cdot), s \in \R$ has oriented displacement length $s$ and its axis runs through $\alpha_3$. Hence, the function $s \mapsto m(s, i e^{\lambda-w})$ is a parametrization of the equidistant curve $\beta$ with speed $\cosh(w)$. Setting
 \begin{equation}
 \kappa = \frac{1}{\cosh(w)}
 \label{eq:define_x}
 \end{equation}
 we get the unit speed parametrization $\beta(s) = m(\kappa s, i e^{\lambda-w})$ i.e.
 \begin{equation}
 \beta(s) = e^\lambda \frac{\sinh(\kappa s)\cosh(w) + i}{\cosh(\kappa s) \cosh(w) + \sinh(w)}, \text{ \ where \ }    -\epsilon_1\cosh(w) \leq s \leq \epsilon_2\cosh(w)
 \label{eq:beta_unit_speed1}
 \end{equation}
 which we may also write in the form
 \begin{equation}
 \beta(s) =  \frac{e^\lambda \sinh(\kappa s)}{\cosh(\kappa s)+\tanh(w)} + i \frac{e^{\lambda-w}}{(\cosh(\kappa s) - 1) \frac{\cosh(w)}{e^w} + 1}.
 \label{eq:beta_unit_speed2}
 \end{equation}
It follows from the definition of $w$ in (\ref{eq:define_w}) that
\begin{equation}
\tanh(w)=\frac{1-\frac{1}{4}\epsilon^2}{1+\frac{1}{4}\epsilon^2} \text{ \ \ and \ \ } \frac{\epsilon}{2}\cosh(w)=\frac{\cosh(w)}{e^w}=\frac{1}{2}\left(1+\frac{1}{4}\epsilon^2\right).
\label{eq:simplify_tanhw}
\end{equation}
Note that the right hand side in \eqref{eq:simplify_tanhw} is equal to the length of $\beta$, given that $\beta$ is equidistant to $\alpha_3$ at distance $w$.\\
For the following calculations we abbreviate
\begin{equation}
a_1=-\cosh(\alpha_1), \text{ \ \ } a_2=\cosh(\alpha_2) \text{ \ and \ } a=\cosh(\alpha_1) + \cosh(\alpha_2).
\label{eq:abbreviate_coshalpha}
\end{equation}
The curve $\tilde{\beta}= F_\beta \circ \beta = (\tilde{\beta}_1,\tilde{\beta}_2)$ with endpoints $B'_1$ and $B'_2$ (see \textit{Figure 3}) is not a horocycle, but almost: in Section \ref{sec:estimates} we shall show
\begin{lem} $\tilde{\beta}$ is the graph of a function $f:[a_1,a_2] \to \R$ with the following properties:
\begin{itemize}
\item[(i)] $2a(1-\frac{1}{8} \epsilon^2) \leq f(x) \leq 2a(1+\frac{1}{6}\epsilon^2)$
\item[(ii)]  $|f'(x)| \leq \frac{4}{15} \epsilon^2$.
\end{itemize}
Furthermore
\begin{itemize}
\item[(iii)]  $2a(1-\frac{1}{2} \epsilon^2) \leq \tilde{\beta}'_1(s) \leq 2a(1 + \frac{1}{6}\epsilon^2)$.
\end{itemize}
Here $\tilde{\beta}'_1(s)$ is the derivative of $\tilde{\beta}_1$ with respect to $s$ and $f'(x)$ is the derivative of $f$ with respect to $x$.
\label{thm:lemma1}
\end{lem}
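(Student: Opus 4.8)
The plan is to compute $\tilde\beta=F_\beta\circ\beta$ as an explicit elementary function of the unit-speed parameter $s$, and then to estimate its two coordinates and their $s$-derivatives.

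First I would pin down which branch of \eqref{eq:Fbeta1} is in force along $\beta$. By \eqref{eq:Fermi_Gamma1} the Fermi coordinate $t(s):=\tau(\beta(s))$ has the sign of $\re\beta(s)$, which by \eqref{eq:beta_unit_speed2} has the sign of $\sinh(\kappa s)$, hence of $s$; and at the endpoints $\beta(-\epsilon_1\cosh w)=B_1$, $\beta(\epsilon_2\cosh w)=B_2$ the very definition \eqref{eq:etai} of $\eta_i$ gives $t(-\epsilon_1\cosh w)=-\eta_1$ and $t(\epsilon_2\cosh w)=\eta_2$. Since $t$ is continuous (indeed monotone) in $s$, it follows that $t(s)\in[-\eta_1,0]$ for $s\le 0$ and $t(s)\in[0,\eta_2]$ for $s\ge 0$, so only the two middle lines of \eqref{eq:Fbeta1} occur. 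Writing a point as $\rgh(t,\up(r,i))$ and using \eqref{eq:Fermi_Gamma2}, this reads
\[
   \tilde\beta(s)=\rgh\!\bigl(-\tfrac{\delta_i}{\eta_i}\,t(s),\,\beta(s)\bigr),\qquad i=1\ \text{if}\ s\le 0,\quad i=2\ \text{if}\ s\ge 0 ,
\]
which together with \eqref{eq:Fermi_Gamma1} and \eqref{eq:beta_unit_speed2} is fully explicit.

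Then comes the estimation, which is the bulk of the work and which I would defer to Section~\ref{sec:estimates}. The small parameters are $\kappa=1/\cosh(w)\le\epsilon$ — so that $|\kappa s|\le\max(\epsilon_1,\epsilon_2)\le\epsilon/2$, because $\epsilon_1+\epsilon_2=\tfrac12\ell(\gamma_3)=\tfrac\epsilon2$ — together with $\delta_i/\eta_i<\tfrac13\epsilon^2$ from \eqref{eq:upperbound_deltaietai}; the algebraic identities \eqref{eq:simplify_tanhw} and the pentagon relations \eqref{eq:pent_formulas}, \eqref{eq:pent_form2} supply the rest, the latter (with $\epsilon_1+\epsilon_2=\epsilon/2$) pinning $e^\lambda$ to $a=\cosh(\alpha_1)+\cosh(\alpha_2)$ through $\tfrac\epsilon2 e^\lambda=2a\bigl(1+O(\epsilon^2)\bigr)$. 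Substituting \eqref{eq:beta_unit_speed2} into \eqref{eq:Fermi_Gamma1} and then into the displayed formula, expanding $\sinh(\kappa s)$, $\cosh(\kappa s)$ and the Moebius map $\rgh$, and bounding remainders with the elementary one-variable inequalities the paper permits itself, I would isolate the leading behaviour — to leading order $\tilde\beta$ is $\beta$ itself, an almost-horizontal arc at height $\approx 2a$ over the interval $[a_1,a_2]$ — and keep one more order to obtain explicit two-sided bounds for $\tilde\beta_1(s)$, $\tilde\beta_1'(s)$, $\tilde\beta_2(s)$ and $\tilde\beta_2'(s)$. The bound on $\tilde\beta_1'(s)$ is precisely (iii); in particular $\tilde\beta_1'(s)\ge 2a(1-\tfrac12\epsilon^2)>0$, so $\tilde\beta_1$ is an increasing bijection of the parameter interval onto $[\tilde\beta_1(-\epsilon_1\cosh w),\tilde\beta_1(\epsilon_2\cosh w)]=[a_1,a_2]$, the endpoint values being read off from \eqref{eq:A1A2prime} (the points $B_i'=F_\beta(B_i)$ lying on the vertical geodesics through $A_1'$, $A_2'$, as in Figure~\ref{fig:figure3}). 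Hence $\tilde\beta$ is the graph of $f:=\tilde\beta_2\circ\tilde\beta_1^{-1}$ on $[a_1,a_2]$; the bound on $\tilde\beta_2$ then gives (i), and $f'(x)=\tilde\beta_2'(s)/\tilde\beta_1'(s)$ together with the bounds on $\tilde\beta_2'$ and $\tilde\beta_1'$ gives (ii).

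The main obstacle is the sharpness, not the shape, of the estimates. The delicate point is the correction coming from $\rgh(-\tfrac{\delta_i}{\eta_i}t(s),\cdot)$: since $\rgh(s',z)=z+\tfrac{s'}{2}(1-z^2)+O((s')^2)$ and $|\beta(s)|^2$ is of size $\sim a^2$ (large when $\alpha_1$ or $\alpha_2$ is large), a naive bound on this term grows with $a$; one must exploit that it carries the factor $t(s)$, which is itself $O(1)$ uniformly (bounded via $\eta_i$), so the net effect on $\tilde\beta$ is only $O(\epsilon^2 a)$, a relative error $O(\epsilon^2)$ — and then track this correction, with its sign, precisely enough to fit inside the asymmetric windows $[\,1-\tfrac18\epsilon^2,\,1+\tfrac16\epsilon^2\,]$ and $|{\cdot}|\le\tfrac4{15}\epsilon^2$. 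The same care is needed when comparing $\tfrac\epsilon2 e^\lambda$ with $2a$, the discrepancy there coming from $\arcsinh$ versus its argument and from $\sqrt{1+\sinh^2\lambda}$ versus $\sinh\lambda$. I would organize this by first recording a few clean zeroth-order identities and then a short list of remainder lemmas.
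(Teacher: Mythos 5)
Your plan and the paper's proof start from the same pointwise identity, $\tilde\beta(s)=\rgh(\delta(s),\beta(s))$ with $\delta(s)=-\tfrac{\delta_i}{\eta_i}t(s)$, but then diverge.  The paper does not expand the M\"obius map directly.  It instead decomposes the tangent map at $\zeta=\beta(s)$ as $dF_\beta = dg\circ A$, where $A$ is the diagonal stretch $\operatorname{diag}(1-\tfrac{\delta_i}{\eta_i},1)$ in a Fermi orthonormal frame and $g=\rgh(\delta(s),\cdot)$ is an isometry, and it then tracks two rotation angles $\vartheta_A$ and $\vartheta_g$.  This buys two things your direct route does not: item (iii) comes almost for free because $\Vert\tilde\beta'\Vert=\Vert A\beta'\Vert\in[1-\tfrac{\delta_2}{\eta_2},1]$ (an isometry does not change hyperbolic norms), and the two angles have the same sign for $s\ge0$, so they add rather than possibly cancel, which keeps the bookkeeping manageable.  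If you differentiate $\rgh(\delta(s),\beta(s))$ coordinatewise you will pick up a Killing--field term proportional to $\delta'(s)$ that the paper's decomposition cleanly absorbs into $A$; that term is not hard, but it makes the expansion markedly heavier and the sign management of the $O(\epsilon^2)$ remainders delicate, which is exactly where the constants $\tfrac18,\tfrac16,\tfrac4{15},\tfrac12$ have to be produced.

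There is also a genuine gap in your explanation of why the M\"obius correction is only a relative $O(\epsilon^2)$.  You argue that the term $\tfrac{\delta(s)}{2}(1-\zeta^2)$ is tamed because $|t(s)|$ is $O(1)$ (bounded via $\eta_i$), so $|\delta(s)|\le\tfrac{\delta_i}{\eta_i}|t(s)|=O(\epsilon^2)$.  But $|1-\zeta^2|\sim a^2$ with $|\zeta|\sim2a$, so that reasoning only gives an absolute error $O(\epsilon^2 a^2)$, i.e.\ a \emph{relative} error $O(\epsilon^2 a)$, which is not uniform in $a$.  What actually closes the argument in the paper is one extra factor of $1/a$: either one notes from \eqref{eq:upperbound_deltai} that $\delta_i=O(\epsilon^2|a_i|/a^2)=O(\epsilon^2/a)$, or, as the paper does between \eqref{eq:varthetag} and \eqref{eq:estimate_varthetag}, one uses \eqref{eq:tanh_ts} and $\beta_1(s)\approx s\,\beta_2(s)$ to show $|t(s)\,\beta(s)|\lesssim \tfrac{s}{\sqrt{s^2+1}}=O(1)$ uniformly in $a$, so that $|\sinh(\tfrac{\delta(s)}{2})\zeta|\lesssim\tfrac{\delta_2}{\eta_2}|t(s)\beta(s)|\lesssim\epsilon^2$.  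The cancellation is between a factor of $a$ in $\zeta$ and a compensating $1/a$ hidden in $\delta(s)$ (through $t(s)$ \emph{together with} $|\beta(s)|$, or through $\delta_i$ itself), not between $t(s)=O(1)$ and anything else; your outline would need to supply this explicitly.  The remaining structure of your sketch --- identifying the active branches of \eqref{eq:Fbeta1}, the endpoint values $[a_1,a_2]$ via \eqref{eq:A1A2prime}, $\tilde\beta_1'>0$ giving the graph, and deducing (i), (ii) from the coordinate bounds --- is sound.
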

We now adjust the mapping $F_\beta$ so that the image of $\mathcal{H}_\beta$ becomes $\mathcal{H}'_{2a}$, where
\begin{equation*}
\mathcal{H}'_{2a} = \{ z \in \mathcal{H}' \mid \im(z) \leq 2a \}.  \label{*eq:define_Hprime2a}
\end{equation*}
To this end we define $G_\beta: F_\beta(\mathcal{H}_\beta) \rightarrow \mathcal{H}'_{2a}$ as follows, using real notation
\begin{equation}
G_\beta(x,y) := \left\{ {\begin{array}{*{20}c}
    (x,y)  \\
    (x,\varphi(x,y))  \\
\end{array}} \right.\text{ \ if \ }
\begin{array}{*{20}c}
   y \leq a \\
   a \leq y \leq f(x), \\
\end{array}
\text{ \ where \ } \varphi(x,y) = a \left(1+ \frac{y-a}{f(x)-a}\right).
\label{eq:defGbeta}
\end{equation}
We remark that by \eqref{eq:abbreviate_coshalpha} the points $A'_1$, $i$, $A'_2$ and hence, the sides $\alpha_1$, $c'$, $\alpha_2$ of $\mathcal{H}'$ in $\Hy$ lie below the horizontal line $y = a$. These sides are therefore not affected by $G_{\beta}$.

For the points $(x,y)$ with $y \geq a$ the Jacobian matrix of $G_\beta$ is of the form
\begin{equation}
J_{G_\beta} =
 \left( {\begin{array}{*{20}c}
   {1} & {\frac{\partial \varphi}{\partial x}}  \\
   {0} & {\frac{\partial \varphi}{\partial y}}  \\
\end{array}} \right)
=
 \left( {\begin{array}{*{20}c}
   {1} & {\rho}  \\
   {0} & {1+\sigma}  \\
\end{array}} \right).
\label{eq:JGb}
\end{equation}
With \textbf{Lemma \ref{thm:lemma1}} a direct calculation yields the bounds $|\rho| \leq \frac{1}{3} \epsilon^2$ and $|\sigma| \leq \frac{1}{3} \epsilon^2$. It follows (see \textbf{Lemma \ref{thm:lemma2}}) that $G_\beta$ is quasiconformal with dilatation $q_{G_\beta}  \leq \frac{1}{1-\frac{\sqrt{2}}{{3}} \epsilon^2}$. By (\ref{eq:dilatation_estimate}) the corrected mapping $G_\beta \circ F_\beta: \mathcal{H}_\beta \rightarrow \mathcal{H}'_{2a}$ is a quasiconformal homeomorphism with dilatation
\begin{equation}
q_{G_\beta \circ F_\beta} \leq 1+ \epsilon^2.
\label{eq:dilatation_phibeta}
\end{equation}

The image of $\beta$ under this mapping is now the horizontal straight line at height $2a$ (the top side of $\mathcal{H}'_{2a}$, \textit{Figure} 3) but it does not have constant speed. We therefore need a second adjustment $H_{\beta} : \mathcal H'_{a,2a} \to \mathcal H'_{a,2a}$, where $\mathcal H'_{a,2a}$ is the rectangle 
\[
\mathcal H'_{a,2a} = \{ z \in \mathcal{H}' \mid a \leq \im(z) \leq 2a \} = \{x + iy \in \Hy \mid a_1 \leq x \leq a_2,\ a \leq y \leq 2a\}  \label{eq:define_Hprime_a2a}
\]
(see \eqref{eq:abbreviate_coshalpha} and the remark after \eqref{eq:defGbeta}).
In order to describe this mapping more conveniently \linebreak we apply an affine parameter change to $\beta$ and its image curves by replacing the  interval \linebreak $[-\epsilon_1\cosh(w),\epsilon_2 \cosh(w)]$ (see (\ref{eq:beta_unit_speed1})) by $[a_1,a_2]$. This allows us to write
\[
G_\beta \circ F_\beta(\beta(t))=(b_1(t),2a), t \in [a_1,a_2],
\]
where $b_1(a_1)=a_1$ and $b_1(a_2)=a_2$ and by \textbf{Lemma \ref{thm:lemma1}}(iii)
\begin{equation}
\epsilon \cosh(w)(1-\frac{1}{2}\epsilon^2) \leq b'_1(t) \leq \epsilon \cosh(w)(1+\frac{1}{6} \epsilon^2)
\label{eq:derivative_b1t}
\end{equation}
 with $\epsilon \cosh(w) = 1 + \frac{1}{4} \epsilon^2$ (see (\ref{eq:simplify_tanhw})). It is also more convenient to begin with the \textit{inverse} mapping $L_{\beta} : \mathcal H'_{a,2a} \to \mathcal H'_{a,2a}$ which we define as follows 
 \[
 L_{\beta}(x,y):=(x',y), \text{ \ where \ } x'= x + \frac{y-a}{a}(b_1(x)-x), \text{ \ for \ } (x,y) \in  \mathcal{H}'_{a,2a}.
\]
Using \eqref{eq:derivative_b1t} we see that $L_{\beta}$ is a diffeomorphism. The Jacobian matrix of $L_\beta$ is of the form
\begin{equation}
J_{L_\beta}=
\left( {\begin{array}{*{20}c}
   {1+ \sigma} & {0}  \\
   {\rho} & {1}  \\
\end{array}} \right), \text{ \ where \ }
\sigma= \frac{y-a}{a}(b'_1(x) -1), \;\rho = \frac{(b_1(x)-x)}{a} .
\label{eq:JacobLbeta}
\end{equation}
Using (\ref{eq:derivative_b1t}) we obtain, after a straightforward calculation, $|\sigma| \leq \frac{1}{2} \epsilon^2$, $ |\rho|  \leq  \frac{1}{4} \epsilon^2$. By \textbf{Lemma \ref{thm:lemma2}} again, the dilatation of $L_\beta$ has the upper bound $q_{L_\beta} \leq 1+ \frac{2}{3} \epsilon^2$.\\ 
The inverse mapping $H_{\beta} :=L_{\beta}^{-1}$ is a quasi-conformal homeomorphism of $\mathcal{H}'_{a,2a}$ onto itself with the same dilatation that brings the image of $\beta$ to constant speed (with respect to either, the euclidean and the hyperbolic metric). We extend $H_{\beta}$ to all of $\mathcal H'_{2a}$ by letting it be the identity below $\mathcal{H}'_{a,2a}$. The mapping 
\begin{equation}
\phi_{\beta} := H_{\beta} \circ G_{\beta} \circ F_{\beta} : \mathcal{H}_{\beta} \to \mathcal H'_{2a}
\label{eq:newphisubbeta}
\end{equation}
is now boundary coherent along $\beta$, and together with \eqref{eq:dilatation_phibeta} it follows that it is a quasiconformal homeomorphism with dilatation
\begin{equation}
q_{\phi_{\beta}} \leq 1+ 2\epsilon^2. 
\label{eq:dilatation_phibeta2}
\end{equation}

\bigskip
Let us now turn to the part above $\beta$. Here we want to use a conformal mapping $F^\beta$ that maps equidistant curves of side $\alpha_3$ to horocycles. \\
\\
We construct $F^\beta$ as follows. First, we use the M{\"o}bius transformation
\[
z \mapsto n(z)= e^{\epsilon_1} \frac{z+e^\lambda}{-z+e^\lambda}, z \in \Hy
\]
sending $\mathcal{H}^\beta$ to the domain $\Omega$ shown in \textit{Figure 4}. This mapping is actually a hyperbolic isometry.\\

\begin{figure}[h!]
\SetLabels
\L(.36*.02) $-1$\\
\L(.49*.02) $\,0$\\
\L(.62*.02) $1$\\
\L(.50*.17) $\,W$\\
\L(.59*.20) $\hat{B}_1$\\
\L(.62*.29) $\,n(\beta)$\\
\L(.39*.37) $\,\Gamma$\\
\L(.70*.40) $\hat{B}_2$\\
\L(.50*.51) $\,\,i$\\
\L(.58*.55) $\Omega$\\
\L(.50*.81) $\,\,ie^{\epsilon /2}$\\
\endSetLabels
\AffixLabels{%
\centerline{%
\includegraphics[height=5cm,width=8cm]{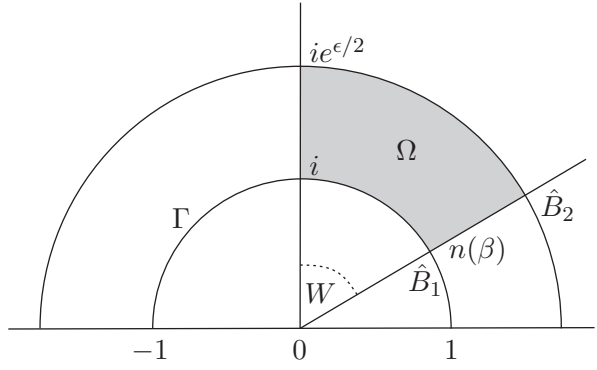}}}
\caption{The domain $\Omega$ in the upper half plane $\Hy$.}
\label{fig:figure4}
\end{figure}
Side $\alpha_3$ is sent to the imaginary axis with endpoints $E_1$ and $E_2$ going to $i$ and $i e^{\frac{\epsilon}{2}}$, respectively. The image $n(\beta)$ with endpoints $\hat{B}_1$ and $\hat{B}_2$ lies on the straight line through the origin that forms an angle $W$ with the imaginary axis. $\hat{B}_1$ lies on the geodesic $\Gamma$ at distance $w$ from $i=n(E_1)$. By (\ref{eq:define_Gamma}) we have
\begin{equation}
\sin(W) = \tanh(w) \text{ \ \ and \ \ } \cos(W) = \frac{1}{\cosh(w)}.
\label{eq:sincosW}
\end{equation}
The function $\zeta \mapsto \log(-i\zeta), \zeta \in \Omega$, maps $\Omega$ conformally to the rectangle
\[
   \{ \zeta' \in \C \mid 0 \leq \re(\zeta') \leq \frac{1}{2} \epsilon, -W \leq \im(\zeta') \leq 0 \}.
\]
We thus define, for $z \in \mathcal{H}^\beta$
\begin{equation}
    F^\beta (z) := \frac{2a}{\epsilon} \log(-i n(z)) + a_1 + 2ia(1+ \frac{W}{\epsilon}).
\label{eq:Fupperbeta}
\end{equation}
$F^\beta$ sends $\mathcal{H}^\beta$ to the rectangle
\[
\hat{\mathcal{H}}'^{2a} = \{ \zeta \in \mathcal{H}' \mid 2a \leq \im(\zeta) \leq 2a(1+\frac{W}{\epsilon}) \}.
\]
The part of $\mathcal{H}'$ above $\mathcal{H}'_{2a}$, however, that corresponds to $Y^{\epsilon^*}_{l_1,l_2}$, is
\[
 \mathcal{H}'^{2a} := \{ \zeta \in \mathcal{H}' \mid 2a \leq \im(\zeta) \leq \frac{2a}{\epsilon^*} \}
\]
whose top side has hyperbolic length $\frac{1}{2} \epsilon^*$ with $\epsilon^* = \frac{2}{\pi}\epsilon$, see \textbf{Theorem~\ref{thm:main}}.  We apply therefore a `vertical correction'
\[
   G^\beta(x,y) := (x,\frac{1}{k_\epsilon}(y-2a)+2a), (x,y) \in \hat{\mathcal{H}}'^{2a} , \text{ \ where \ }
   k_\epsilon = \frac{2W}{\pi-2\epsilon} = \frac{1}{\frac{\pi}{2}-\epsilon} \arccos\left(\frac{\epsilon}{1+\frac{\epsilon^2}{4}}\right).
\]
Here the expression on the right hand side follows from (\ref{eq:simplify_tanhw}) and (\ref{eq:sincosW}). Setting
\begin{equation}
\phi^{\beta} :=G^\beta \circ F^\beta 
\label{eq:phiupperbeta}
\end{equation}
we now have $\phi^{\beta}(\mathcal{H}^\beta) = \mathcal{H}'^{2a}$ as required. Furthermore, since $F^\beta$ is conformal $\phi^\beta$ and $G^\beta$ have the same dilatation $k_\epsilon$. An elementary consideration shows that
\begin{equation}
1 \leq k_\epsilon \leq 1+ \frac{\epsilon^3}{6\pi}(1+\epsilon) < 1+ \frac{1}{25} \epsilon^2.
\label{eq:dilatation_kepsilon}
\end{equation}
One easily checks that $\phi^{\beta} \circ \alpha_3$ and $\phi^{\beta} \circ \beta$ have constant Euclidean and hyperbolic speeds. In  particular $\phi^{\beta}$ is boundary coherent along $\alpha_3$. Moreover, it matches with $\phi_\beta$ along the common side $\phi_\beta(\beta) = \phi^{\beta}(\beta)$. We thus set $\phi = \phi_\beta$ on $\mathcal{H}_\beta$ and $\phi = \phi^\beta$ on $\mathcal{H}^\beta$ and extend it symmetrically to the back side $\tilde{\mathcal{H}}$ of $Y_{l_1,l_2,\epsilon}$. The mapping $\phi$ thus defined is boundary coherent and by  \eqref{eq:dilatation_phibeta2}, \eqref{eq:phiupperbeta} and \eqref{eq:dilatation_kepsilon} has dilatation $\leq 1 + 2\epsilon^2$. This completes the proof of \textbf{Theorem \ref{thm:main}} modulo the estimates of the next section.

\section{Estimates}  \label{sec:estimates}
In this section we provide the missing proofs of inequality (\ref{eq:dilatation_estimate}) and \textbf{Lemma \ref{thm:lemma1}}. As the trigonometric functions involved in the construction of $\phi$ lead to unmanageable expressions our approach is through inequalities that include small numerical constants.\\
The first such inequality is the following, where $\lambda$ is the height of the hexagon $\mathcal{H}$ (see \textit{Figure 2}).
\begin{equation}
1 \le \frac{e^\lambda}{2\sinh(\lambda)} \leq 1+ \frac{1}{63} \epsilon^2.
\label{eq:lambda_epsilon}
\end{equation}
\begin{proof}
We recall that by hypothesis $0 \leq \epsilon \leq \frac{1}{2}$ and $ \epsilon_1 + \epsilon_2 = \frac{1}{2}\epsilon$. Let $\epsilon_k$ be the smaller of $\epsilon_1$ and $\epsilon_2$. Then with (\ref{eq:pent_formulas})
\[
\frac{1}{2} e^\lambda \sinh(\frac{\epsilon}{4}) \geq \sinh(\lambda)\sinh(\epsilon_k) = \cosh(\alpha_k) > 1.
\]
Hence
\[
 2e^{-\lambda} \sinh(\lambda) = 1- e^{-2\lambda} \geq 1- \frac{1}{4} \sinh(\frac{\epsilon}{4})^2,
\]
from which the estimate in (\ref{eq:lambda_epsilon}) follows .
\end{proof}
Another such inequality is
\begin{equation}
1 > \sigma_1 :=\frac{\frac{1}{2}\epsilon}{\sinh(\epsilon_1)+ \sinh(\epsilon_2)} \geq \frac{\frac12 \epsilon}{\sinh(\frac12 \epsilon)} \geq 1-\frac{\epsilon^2}{24}.
\label{eq:sigma1_epsilon}
\end{equation}
For the altitude $e^{\lambda-w}$ of the point $\beta(0)$ on the upper half plane (see \textit{Figure 3} and \eqref{eq:beta_unit_speed1}) and the altitude $2a$ of $\phi_\beta(\beta)$ we have
\begin{eqnarray}
e^{\lambda-w} = 2a \cdot \sigma_2, \text{ \ with \ } \sigma_2&:=&\frac{e^\lambda}{2\sinh(\lambda)} \cdot \sigma_1 \text{ \ satisfying \ } \nonumber \\
1-\frac{\epsilon^2}{24} \leq \sigma_2 &\leq&  1+\frac{\epsilon^2}{63}. \label{eq:sigma2_epsilon}
\end{eqnarray}
\begin{proof}
This follows by combining (\ref{eq:lambda_epsilon}) and (\ref{eq:sigma1_epsilon}) and using that $\sinh(\lambda)(\sinh(\epsilon_1) + \sinh(\epsilon_2)) = \cosh(\alpha_1) + \cosh(\alpha_2)=a$ (see (\ref{eq:pent_formulas}),(\ref{eq:define_w}) and (\ref{eq:abbreviate_coshalpha})).
\end{proof}
For the endpoints of $\beta$ (again parametrized as in \eqref{eq:beta_unit_speed1}, \eqref{eq:beta_unit_speed2})
\[
B_i = \beta(s_i) =x_i + i y_i, \text{ \ \ where \ \ } s_1=-\epsilon_1 \cosh(w) \text{  \ and \ } s_2=\epsilon_2 \cosh(w)
\]

we have
\begin{eqnarray}
x_i= a_i \sigma_{x_i} \text{ \ with \ }  \sigma_{x_i}&:=& \frac{e^\lambda}{2\sinh(\lambda)} \cdot \frac{2}{\cosh(\epsilon_i) + \tanh(w)} \text{ \ \ satisfying \ } \nonumber \\
1+\frac{1}{6} \epsilon^2 \leq \sigma_{x_i} &\leq&  1+\frac{2}{7} \epsilon^2 \label{eq:sigmaxi_epsilon}
\end{eqnarray}
and
\begin{eqnarray}
y_i= 2a \sigma_{y_i} \text{ \ with \ }  \sigma_{y_i}&:=& \frac{\sigma_2}{(\cosh(\epsilon_i)-1)\frac{\cosh(w)}{e^w} +1}  \text{ \ \ satisfying \ } \nonumber \\
1-\frac{1}{8} \epsilon^2 \leq \sigma_{y_i} &\leq&  1+\frac{1}{63} \epsilon^2. \label{eq:sigmayi_epsilon}
\end{eqnarray}
\begin{proof}
This follows from the parametrization in (\ref{eq:beta_unit_speed2}) plus (\ref{eq:define_w}),(\ref{eq:simplify_tanhw}) and (\ref{eq:lambda_epsilon}) and elementary simplifications.
\end{proof}

We are now ready to prove the inequality (\ref{eq:dilatation_estimate}) for $q_{F_\beta}$.  First we show that $\eta_i= H_iD_0$, (see (\ref{eq:etai}) and \textit{Figures 2,3}) has a lower bound independent of $\epsilon$:
\begin{equation}
\eta_i \geq \frac{2}{5} \frac{\cosh(\alpha_i)}{(\cosh(\alpha_1)+ \cosh(\alpha_2))^2} = \frac{2}{5} \frac{|a_i|}{a^2}.
\label{eq:lowerbound_etai}
\end{equation}
\begin{proof}
By (\ref{eq:Fermi_Gamma1}) we have that $\eta_i > \tanh(\eta_i) = \frac{2|x_i|}{x^2_i + y^2_i + 1}$. By (\ref{eq:sigmaxi_epsilon}) and (\ref{eq:sigmayi_epsilon}) using that $\sigma^2_{x_i} + 4 \sigma^2_{y_i} \leq 5(1+\frac{1}{6} \epsilon^2)$ we obtain
\begin{equation}
x^2_i + y^2_i + 1 \leq 5(1+\frac{1}{6} \epsilon^2)a^2.
\label{eq:xiyi_epsilon}
\end{equation}
Together with the lower bound in (\ref{eq:sigmaxi_epsilon}) this proves (\ref{eq:lowerbound_etai}).
\end{proof}
In (\ref{eq:displace_estimate}) we have shown that $\delta_i \leq \frac{\sinh(\epsilon_i)^2}{2|a_i|}$. By (\ref{eq:sigma1_epsilon}), (\ref{eq:pent_formulas}) and the definitions in \eqref{eq:abbreviate_coshalpha} we have  $\sinh(\epsilon_i) \cdot \frac{2}{\epsilon} = \frac{|a_i|}{a}\cdot \frac{1}{\sigma_1}$. Hence,
\begin{equation}
\delta_i \leq \frac{\epsilon^2}{8 \sigma_1^2}\cdot \frac{|a_i|}{a^2} < \frac{\epsilon^2}{16 \sigma_1^2}.
\label{eq:upperbound_deltai}
\end{equation}
Together with (\ref{eq:lowerbound_etai}) and \eqref{eq:sigma1_epsilon} this yields
\begin{equation}
\frac{\delta_i}{\eta_i} \leq \frac{1}{3} \epsilon^2.
\label{eq:upperbound_deltaietai}
\end{equation}
This completes the proof of inequality (\ref{eq:dilatation_estimate}). \hfill $\square$ \\

Later we shall also need the following upper bound
\begin{equation}
\tanh(\eta_i) = \frac{2 \vert a_i \vert \sigma_{x_i}}{a_i^2 \sigma_{x_i}^2+4a^2 \sigma_{y_i}^2+1}\leq\frac19 + \frac{1}{16}\epsilon^2.
\label{eq:upperbound_thetai}
\end{equation}
The bound is obtained via \eqref{eq:sigmaxi_epsilon}, \eqref{eq:sigmayi_epsilon} using that $a \geq 1 + \vert a_i \vert$.\\

We turn to the proof of \textbf{Lemma \ref{thm:lemma1}} using the notation
\begin{eqnarray*}
    \beta(s) &=& \beta_1(s) + i \beta_2(s) \\
    \tilde{\beta}(s) &=& F_\beta(\beta(s)) =  \tilde{\beta}_1(s) + i \tilde{\beta}_2(s).
\end{eqnarray*}

To avoid distinctions of cases with different signs we restrict ourselves without loss of generality to $s \geq 0$. Thus, $s \in [0,\epsilon_2 \cosh(w)]$.\\
As $\beta$ is a half-circle we have by (\ref{eq:sigmayi_epsilon}) for the lower bound and by \eqref{eq:beta_unit_speed1}, \eqref{eq:sigma2_epsilon} for the upper
\begin{equation}
2a(1-\frac{1}{8} \epsilon^2) \leq \beta_2(s) \leq \beta_2(0) = e^{\lambda - w}\leq 2a(1+ \frac{1}{63} \epsilon^2).
\label{eq:beta2_epsilon}
\end{equation}
For the tangent vectors $\beta'(s)$ the Euclidean norm is denoted by $|\beta'(s)|$ and the hyperbolic norm by $\|\beta'(s) \| = \frac{|\beta'(s)|}{\beta_2(s)}$. The slope of $\beta$ for $s \in [0,\epsilon_2 \cosh(w)]$ satisfies
\begin{equation}
 -s \frac{\epsilon^2}{2}(1-\frac{5}{16} \epsilon^2) \geq \frac{\beta'_2(s)}{|\beta'(s)|} = \frac{\beta'_2(s)}{\beta_2(s)} \geq -\frac{\epsilon}{2} \sinh(\frac{\epsilon}{2}).
 \label{eq:beta2_epsilon1}
\end{equation}
\begin{proof}
Since $\| \beta'(s)\| = 1$ (see below \eqref{eq:define_x}) we have the equation in the middle. By  (\ref{eq:beta_unit_speed2}) and using that $\kappa = \frac{1}{\cosh(w)}$ \eqref{eq:define_x} and $e^{-w} = \frac{\epsilon}{2}$ \eqref{eq:define_w} we have
\[
\frac{\beta'_2(s)}{\beta_2(s)} = \frac{- \frac{\epsilon}{2} \sinh(\kappa s)}{1+ \frac{\cosh(w)}{e^w}(\cosh(\kappa s) -1)}.
\]
The inequalities then follow from a straightforward elementary calculation via \eqref{eq:simplify_tanhw} using for the first inequality that $\cosh(\kappa s) \leq \cosh(\epsilon_2) \leq \cosh(\frac{\epsilon}{2})$ and $\sinh(\kappa s) \leq \sinh(\frac{\epsilon}{2})$ for the second.
\end{proof}
Since $\Vert \beta'(s) \Vert =1$ and hence $\beta_1'(s)^2 + \beta_2'(s)^2 = \beta_2(s)^2$, Equation \eqref{eq:beta2_epsilon1} implies that $\beta_1'(s)^2 \geq \beta_2(s)^2(1  -(\frac{\epsilon}{2} \sinh(\frac{\epsilon}{2}))^2)$. From this we get, by \eqref{eq:beta2_epsilon1} again, and after elementary simplification
\begin{equation}
 \frac{\beta'_2(s)}{\beta'_1(s)} \geq -\frac{4}{15} \epsilon^2.
\label{eq:beta2beta1_epsilon}
\end{equation}
To prove similar estimates for $\tilde{\beta}$ we look at the action of $F_\beta$ in $\Hy$.\\
To this end we fix $s \in [0, \epsilon_2 \cosh(w)]$ and consider the hyperbolic isometry $g$ with axis $\Gamma$ that moves the point $\beta(s)$ to the point $F_\beta(\beta(s))$. By (\ref{eq:Fbeta1}) $g$ has oriented displacement length

\begin{equation}
\delta = \delta(s) = - \frac{\delta_2}{\eta_2} t(s), \quad t(s) \in [0,\eta_2],
\label{eq:deltas}
\end{equation}
where by (\ref{eq:Fermi_Gamma1})
\begin{equation}
\tanh(t(s)) = \frac{2\beta_1(s)}{\beta^2_1(s) + \beta^2_2(s) + 1}.
\label{eq:tanh_ts}
\end{equation}
In complex notation $g$ is given as $g(z) = \rgh(\delta,z), z \in \Hy$ (see (\ref{eq:define_up_rgh})). For any tangent vector $v=(v_1,v_2) = v_1 + i v_2$ at $z$ the tangent map $dg$ acts by multiplication:
\begin{equation}
dg(v) = g'(z) \cdot v,
\label{eq:dg_v}
\end{equation}
where $g'$ is the complex derivative
\begin{equation}
g'(z) = \frac{1}{(\sinh(\frac{\delta}{2})z + \cosh(\frac{\delta}{2}))^2}.
\label{eq:complex_dg}
\end{equation}
Let now $z$ be the point $\zeta= \beta(s)$. Both $F_\beta$ and $g$ send $\zeta$ to the point
\[
   \tilde{\beta}(s) = F_\beta(\beta(s)) = g(\zeta).
\]
Since $g$ shifts $\zeta$ towards the imaginary axis along the equidistant line of $\Gamma$ i.e. the half circle in $\Hy$ through $-1,1$ and $\zeta$ we have using \eqref{eq:beta2_epsilon}
\begin{equation}
 \tilde{\beta}_2(s) \geq \beta_2(s) \geq 2a(1-\frac{\epsilon^2}{8}) 
\label{eq:lowbdbtilde}
\end{equation}
For an upper bound of $\tilde{\beta}_2(s)$ we use that
\begin{eqnarray*}
   \im(g(\zeta)) &=& \im(\zeta) \cdot \left( \cosh(\frac{\delta}{2})^2 + \sinh(\frac{\delta}{2})^2 |\zeta|^2 + \sinh(\delta) \re(\zeta) \right)^{-1} \\
                    &\leq& \im(\zeta)\cdot \left( 1-|\sinh(\delta) \re(\zeta)|\right)^{-1},
\end{eqnarray*}
where $\vert \delta \vert \leq \delta_2$ (see \eqref{eq:deltas}).
By (\ref{eq:sigmaxi_epsilon}) and (\ref{eq:upperbound_deltai}) we have, recalling that $\re(\zeta) = \beta_1(s)$,
\[
   |\sinh(\delta) \re(\zeta)| \leq \frac{\sinh(\delta_2)}{\delta_2} \delta_2 x_2 \leq \frac{\sinh(\delta_2)}{\delta_2} \cdot \frac{\epsilon^2}{8} \cdot \frac{\sigma_{x_2}}{\sigma^2_1} < 1.1 \cdot \frac{\epsilon^2}{8}.
\]
Using that $\im(\zeta) = \beta_2(s)$ and $\tilde{\beta}_2(s) = \im(g(\zeta))$ we get
\begin{equation}
\beta_2(s) \leq \tilde{\beta}_2(s) \leq (1+ \frac{1}{7} \epsilon^2) \beta_2(s).
\label{eq:beta2_tildebeta2}
\end{equation}
Together with \eqref{eq:lowbdbtilde} this proves item (i) of \textbf{Lemma \ref{thm:lemma1}}.

For the slope of $\tilde{\beta}$ we estimate the angle between the tangent vectors $\beta'(s)$ and $\tilde{\beta}'(s)$. From the definition of $F_\beta$ in  (\ref{eq:Fbeta1}) it follows that the tangent map $dF_\beta$ is a product
\begin{equation}
dF_\beta = dg \cdot A,
\label{eq:dFbeta_dgA}
\end{equation}
where $A$ is a linear map in the tangent space of $\Hy$ at $\zeta=\beta(s)$ that, with respect to a suitable orthonormal basis, has the matrix
\[
M_A=
\left( {\begin{array}{*{20}c}
   {1- \frac{\delta_2}{\eta_2}} & {0}  \\
   {0} & {1}  \\
\end{array}} \right).
\]
This fact is seen in the Fermi coordinates used in (\ref{eq:Fbeta1}) for which the metric tensor (\ref{eq:Fermi_metric}) is independent of $t$; the underlying basis vectors for $A$ are tangent to the coordinate lines. It is an elementary exercise to show that for such a mapping the angle $\vartheta_A = \measuredangle(\beta'(s),A\beta'(s))$ between $\beta'(s)$ (or any other tangent vector) and its image satisfies
\begin{equation*}
|\sin(\vartheta_A)| \leq \frac{\frac{\delta_2}{\eta_2}}{2-\frac{\delta_2}{\eta_2}}.
\end{equation*}
A glance at \textit{Figure} 2 shows that for $s \geq 0$ (to which case we have restricted ourselves) the angle $\vartheta_A$ between $\beta'(s)$ and $A \beta'(s)$ is $\geq 0$. Furthermore, by \eqref{eq:upperbound_deltaietai}, $\frac{\delta_2}{\eta_2} \leq \frac13 \epsilon^2$. Hence, after some simplification,
\begin{equation}
0 \leq \vartheta_A \leq j \cdot \frac{\epsilon^2}{6}
\label{eq:sin_varthetaA}
\end{equation}
with $j= 1.05$.\\
Note that $\vartheta_A$ is also the Euclidean angle between the vectors because the model $\Hy$ of the hyperbolic plane is conformal. \\
\\
For the Euclidean angle $\vartheta_g := \measuredangle(A\beta'(s),dg(A\beta'(s)))$ we have by (\ref{eq:dg_v}) and (\ref{eq:complex_dg}) applied to $z = \zeta = \beta(s)$
\begin{equation}
 \vartheta_g = \arg(g'(\zeta))= -2 \arg\left(\sinh(\frac{\delta(s)}{2})\zeta + \cosh(\frac{\delta(s)}{2})\right),
\label{eq:varthetag}
\end{equation}
where $\delta(s)$ is given by (\ref{eq:deltas}). By \eqref{eq:tanh_ts}, the monotonicity of the function $\tau \mapsto \frac{\tau}{\tanh(\tau)}$ and using \eqref{eq:upperbound_thetai}  we get
\[
t(s) < j_1 \frac{2 \beta_1(s)}{\beta_1(s)^2 + \beta_2(s)^2}
\]
with $j_1 = \frac{\eta_2}{\tanh(\eta_2)} < 1.01$. Furthermore, by \eqref{eq:beta_unit_speed1} $\beta_1(s) = \sinh(\kappa s) \cosh(\omega) \beta_2(s)$, where $\kappa s \leq \epsilon_2 \leq \frac{\epsilon}{2}$. Recalling that $\kappa = \frac{1}{\cosh(w)}$ \eqref{eq:define_x}, we get
\[
s \beta_2(s) \leq \beta_1(s) \leq j_2 \cdot s \beta_2(s),
\]
with $j_2 = 4 \sinh(\frac14)< 1.02$, and from this
\[
t(s) < j_1 j_2 \frac{2s \beta_2(s)}{\beta_1(s)^2 + \beta_2(s)^2} \leq j_1 j_2 \frac{2s}{\sqrt{s^2+1}}\cdot \frac{1}{\vert \beta(s) \vert}.
\]
Setting $j_3 = \frac{2}{\delta_2} \sinh(\frac{\delta_2}{2})$ we conclude using \eqref{eq:tanh_ts} and then \eqref{eq:upperbound_deltaietai}
\[
\vert \sinh(\frac{\delta(s)}{2}) \beta(s) \vert \leq j_3 \vert \frac{\delta(s)}{2}\beta(s) \vert = j_3 \cdot \frac12 \cdot \frac{\delta_2}{\eta_2}\vert t(s) \beta(s) \vert \leq j_1 j_2 j_3 \frac{s}{\sqrt{s^2+1}} \cdot \frac{\epsilon^2}{3}.
\]
By \eqref{eq:upperbound_deltai} $j_3 < 1.001$. The second summand in the argument as of  \eqref{eq:varthetag} is a real number $>1$. Recalling that for $s \geq 0$ we have $\delta(s) \leq 0$ and hence, $\vartheta_g \geq 0$, we obtain therefore the following
\begin{equation}
0\leq \vartheta_g \leq j \cdot \frac{2\epsilon^2}{3} \frac{s}{\sqrt{s^2+1}},
\label{eq:estimate_varthetag}
\end{equation}
where $j$ includes all the preceding correcting factors and has value $j \leq 1.05$. \\
By (\ref{eq:dFbeta_dgA}), \eqref{eq:sin_varthetaA} and \eqref{eq:estimate_varthetag} the tangent map $dF_\beta$ rotates the tangent vector $\beta'(s)$ by an angle $\vartheta_A + \vartheta_g \geq 0$, where $\vartheta_A$ and $\vartheta_g$ have the upper bounds (\ref{eq:sin_varthetaA}) and (\ref{eq:estimate_varthetag}), while the tangent vector $\beta'(s)$ satisfies (\ref{eq:beta2_epsilon1}). Bringing this together we obtain the following upper bound for the oriented angle $\vartheta(s)$ between $\tilde{\beta}'(s)$ and the horizontal line through $\tilde{\beta}(s)$
\[
\vartheta(s) \leq \epsilon^2 \left( -\frac{s}{2}(1-\frac{5}{16}\epsilon^2) +  \frac{2j}{3}\frac{s}{\sqrt{s^2+1}} + \frac{j}{6}
\right).
\]
The right hand side is a monotonically increasing function of $s$ and $0 \leq s \leq \frac{\epsilon}{2} \cosh(w) = \frac12(1+\frac14 \epsilon^2)$ (by \eqref{eq:simplify_tanhw}). It follows that $\vartheta(s) < \frac14 \epsilon^2$ and
\[
   \frac{\tilde{\beta}'_2(s)}{\tilde{\beta}'_1(s)} \leq \frac{4}{15} \epsilon^2.
\]
Since $dF_\beta$ rotates $\beta'(s)$ counterclockwise we have $\frac{\tilde{\beta}'_2}{\tilde{\beta}'_1} \geq \frac{\beta'_2}{\beta'_1}$. Hence, together with (\ref{eq:beta2beta1_epsilon}) we have proved item (ii) of \textbf{Lemma \ref{thm:lemma1}}. \\
As for item (iii), by \eqref{eq:dFbeta_dgA} the tangent vector $\tilde{\beta}'(s) = dF_{\beta}(\beta'(s))$ has hyperbolic length
\[
1 - \frac{\delta_2}{\eta_2} \leq \Vert \tilde{\beta}'(s) \Vert = \Vert A(\beta'(s))\Vert \leq 1.
\]
owing to the fact that $\Vert \beta'(s) \Vert = 1$. Since by \textbf{Lemma \ref{thm:lemma1}}(ii) $\tilde{\beta}'(s)$ satisfies $\left \vert\frac{\tilde{\beta}'_2(s)}{\tilde{\beta}'_1(s)}\right \vert \leq \frac{4}{15} \epsilon^2$ and as $\vert \tilde{\beta}'(s)\vert = \tilde{\beta}_2(s) \Vert\tilde{\beta}'(s)\Vert$ item (iii) now follows from \eqref{eq:upperbound_deltaietai}, \eqref{eq:beta2_epsilon}, \eqref{eq:beta2_tildebeta2} by elementary simplifications.\\

We finish this section with two known estimates for linear maps which we prove for convenience. For the purpose of this paper a piecewise smooth mapping $F : M \to N$ from a Riemannian manifold $M$ into a Riemannian manifold $N$ is said to have \emph{length distortion} $\leq k$ if for any tangent vector $v$ of $M$
\[
\frac{1}{k}  \Vert v \Vert_M \leq \Vert dF(v) \Vert_N \leq k \cdot \Vert v \Vert_M,
\]
where $\Vert \, . \,  \Vert_M$ and $\Vert \, . \,  \Vert_N$ are the respective Riemannian norms. 

In the following lemma the Riemannian metric in question is the standard Euclidean metric of $\R^2$.

\begin{lem} Consider the linear mapping $\psi : \R^2 \rightarrow \R^2$ defined by the matrix
\[
M=
\left( {\begin{array}{*{20}c}
   {1} & {\rho}  \\
   {0} & {1+ \sigma}  \\
\end{array}} \right),
\text{ \ \ where \ \ }  \rho^2 + \sigma^2 < 1.
\]
Then $\psi$ is quasiconformal with dilatation $q= \frac{1}{1-\sqrt{\rho^2 + \sigma^2}}$ and has length distortion $q' \leq \frac{1+\frac12 \vert \rho \vert + \frac18 \rho^2}{1-|\sigma|}$.
\label{thm:lemma2}
\end{lem}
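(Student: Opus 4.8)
The plan is to analyze the linear map $\psi$ directly through the singular values of $M$, since for a linear map the dilatation is the ratio of the largest to the smallest singular value and the length distortion is governed by the singular values themselves (the largest controls the upper bound, the reciprocal of the smallest controls the lower bound). First I would compute $M^{T}M = \begin{pmatrix} 1 & \rho \\ \rho & \rho^2 + (1+\sigma)^2 \end{pmatrix}$ and note that $\det(M) = 1+\sigma$ while $\operatorname{tr}(M^{T}M) = 1 + \rho^2 + (1+\sigma)^2$. Writing $\mu_1 \geq \mu_2 > 0$ for the singular values, we have $\mu_1\mu_2 = |1+\sigma|$ and $\mu_1^2 + \mu_2^2 = 1+\rho^2+(1+\sigma)^2$. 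The dilatation is $q = \mu_1/\mu_2$, and a standard manipulation gives $q + q^{-1} = (\mu_1^2+\mu_2^2)/(\mu_1\mu_2)$, from which $q$ is determined.

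For the dilatation bound, I would instead use the more elementary route that avoids solving the quadratic: decompose $M = I + N$ where $N = \begin{pmatrix} 0 & \rho \\ 0 & \sigma \end{pmatrix}$ has operator norm $\|N\| = \sqrt{\rho^2+\sigma^2} =: \tau < 1$. Then for any unit vector $v$ one has $1-\tau \leq \|Mv\| \leq 1+\tau$, so the ratio of the extreme values of $\|Mv\|$ is at most $(1+\tau)/(1-\tau)$. But one can do better: the conformal dilatation is scale-invariant, and a cleaner bound comes from observing that $M$ maps the unit circle to an ellipse whose axes $\mu_1,\mu_2$ satisfy $\mu_2 \geq \mu_1 - \text{(something)}$; the sharp statement $q \leq \frac{1}{1-\tau}$ follows because $\mu_2 = |1+\sigma|/\mu_1 \geq |1+\sigma|/(1+\tau)$ and $\mu_1 \leq 1+\tau$ must be reconciled — more directly, one checks $\mu_1\mu_2 = |1+\sigma| \geq 1 - |\sigma| \geq 1 - \tau$ and $\mu_1 \leq 1+\tau$, whereas $q = \mu_1^2/(\mu_1\mu_2) = \mu_1^2/|1+\sigma|$; I would verify $\mu_1^2/|1+\sigma| \leq 1/(1-\tau)$ by using $\mu_1^2 \leq \mu_1^2+\mu_2^2 - (1-\tau)^2 \cdot(\text{correction})$. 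In practice the slickest proof uses the identity $\mu_1 - \mu_2 \le \| N v\|$ evaluated suitably, or simply quotes that for $M=I+N$ with $\|N\|=\tau$, all singular values lie in $[1-\tau,1+\tau]$ so that $q\le \frac{1+\tau}{1-\tau}$ — and then sharpens using $\det M = 1+\sigma$: since $\mu_1\mu_2=|1+\sigma|\ge 1-\tau$ and $\mu_1\le 1+\tau$ is too weak, I would instead argue $\mu_1\le \sqrt{1+2\tau} \le$ etc. The main obstacle is getting exactly the claimed constant $q=\frac{1}{1-\sqrt{\rho^2+\sigma^2}}$ rather than a weaker $(1+\tau)/(1-\tau)$; this requires the observation that one should bound $\mu_2$ from below by $1-\tau$ and bound $\mu_1$ not by $1+\tau$ but via $\mu_1 = |1+\sigma|/\mu_2 \leq (1+|\sigma|)/(1-\tau) \cdot(\dots)$, or else directly estimate $\max_{|v|=1}\|Mv\|$ and $\min_{|v|=1}\|Mv\|$ and show their ratio is at most $1/(1-\tau)$ by an explicit computation with $v=(\cos\theta,\sin\theta)$.

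For the length distortion, I would take the Euclidean norm as both $\|\cdot\|_M$ and $\|\cdot\|_N$. The upper bound: for $|v|=1$, $\|Mv\| \leq \|v\| + \|Nv\|$ where $Nv = (\rho v_2, \sigma v_2)$, but this only gives $1 + \sqrt{\rho^2+\sigma^2}$; to obtain the sharper $1 + \tfrac12|\rho| + \tfrac18\rho^2$ I would compute $\|Mv\|^2 = (v_1+\rho v_2)^2 + (1+\sigma)^2 v_2^2$ and maximize over the unit circle, treating $v_1,v_2$ as $\cos\theta,\sin\theta$; one extracts $\|Mv\|^2 \le \big(1+\tfrac12|\rho|\big)^2 + |\sigma|(2+|\sigma|) \le \big(1 + \tfrac12|\rho| + \tfrac18\rho^2\big)^2/(1-|\sigma|)^2 \cdot(\dots)$ — the intent of the asymmetric form of the bound (a bound on $\|dF(v)\|$ from above involving only $\rho$, and a bound from below involving $\sigma$) is that $M^{-1} = \begin{pmatrix} 1 & -\rho/(1+\sigma) \\ 0 & 1/(1+\sigma)\end{pmatrix}$, so $\|Mv\|\ge \|v\|/\|M^{-1}\|_{op}$ and $\|M^{-1}\|_{op} \leq \frac{1}{1-|\sigma|}\cdot\big(1 + \tfrac12|\rho| + \tfrac18\rho^2\big)$ by applying the just-derived upper bound to the matrix $M' = \begin{pmatrix}1 & -\rho \\ 0 & 1\end{pmatrix}$ followed by scaling by $1/(1+\sigma)$. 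Thus the key reduction is: prove $\left\Vert \begin{pmatrix}1&\rho\\0&1\end{pmatrix}\right\Vert_{op} \le 1+\tfrac12|\rho|+\tfrac18\rho^2$, which is the elementary estimate $\tfrac12\big(1+\rho^2/ (\text{whatever}) + \sqrt{\rho^4 + 4\rho^2}\big)\le\dots$, equivalently $\sqrt{1+\rho^2/4}+|\rho|/2 \le 1 + |\rho|/2 + \rho^2/8$, which holds since $\sqrt{1+\rho^2/4}\le 1+\rho^2/8$. Both the upper and lower length-distortion bounds then fall out by combining this shear estimate with the diagonal scaling by $1+\sigma$, and the whole lemma is thereby reduced to these two one-line scalar inequalities.
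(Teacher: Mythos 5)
Your treatment of the length-distortion bound is essentially the paper's own argument: factor $M$ into a diagonal scaling $M_\sigma = \bigl(\begin{smallmatrix}1&0\\0&1+\sigma\end{smallmatrix}\bigr)$ and a shear $M_\rho = \bigl(\begin{smallmatrix}1&\rho\\0&1\end{smallmatrix}\bigr)$, bound the operator norms of each factor and its inverse, and reduce the shear estimate to the scalar inequality $\sqrt{1+\rho^2/4}\le 1+\rho^2/8$. Your detour through $M^{-1}$ is the same decomposition read backwards, and your identification of $\|M_\rho\|_{op}=\sqrt{1+\rho^2/4}+|\rho|/2$ is correct. That half is fine.

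The dilatation half, however, has a genuine gap, and you say so yourself: ``The main obstacle is getting exactly the claimed constant \dots''\ and you never resolve it. The sequence of attempts you list -- bounding singular values by $\|N\|$, trying $\mu_1\le\sqrt{1+2\tau}$, ``etc.'' -- is a record of false starts, not a proof, and the naive operator-norm route indeed stalls at $(1+\tau)/(1-\tau)$. The paper instead computes the Beltrami coefficient $\mu = \dfrac{-\sigma+i\rho}{2+\sigma-i\rho}$, observes that for fixed $d=\sqrt{\rho^2+\sigma^2}$ the modulus $|\mu|$ is maximized at $\rho=0$, $\sigma=-d$, and reads off $q=\frac{1+|\mu|}{1-|\mu|}=\frac{1}{1-d}$; this is short and produces the sharp constant directly. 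Your singular-value setup could also be made to work, and you had the ingredients in your first paragraph: from $\mu_1^2+\mu_2^2=1+\rho^2+(1+\sigma)^2$ and $\mu_1\mu_2=1+\sigma$ one gets the exact identity $(\mu_1-\mu_2)^2=\rho^2+\sigma^2$, hence $\mu_1-\mu_2=\tau$; combining $\mu_1\mu_2=1+\sigma\ge 1-\tau$ with $\mu_1=\mu_2+\tau$ forces $\mu_2\ge 1-\tau$, whence $q=\mu_1/\mu_2=1+\tau/\mu_2\le 1/(1-\tau)$. But you never carry out this computation, so as written the dilatation statement is unproved.
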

\begin{proof}
For the first statement we follow \cite{alp1}, \textbf{Lemma 3.2} arguing with the Beltrami coefficient 
\[
\mu= \frac{\partial \psi}{ \partial \bar{z}}/\frac{\partial \psi}{ \partial z}, \text{ \ \ where \ \ }
\frac{\partial}{\partial \bar{z}} = \frac{1}{2}\left( \frac{\partial}{\partial x} + i  \frac{\partial}{\partial y}\right) \text{ \ and \ } \frac{\partial}{\partial z} = \frac{1}{2}\left( \frac{\partial}{\partial x} - i  \frac{\partial}{\partial y}\right).
\]
In our case $\mu$ becomes $\mu= \frac{-\sigma + i \rho}{2+\sigma -i\rho}$ and the dilatation is $q= \frac{1+|\mu|}{1-|\mu|}$. For given value $d$ of $\sqrt{\rho^2 + \sigma^2}$ the value of $|\mu|$ is maximal when $\rho = 0$ and $\sigma = -d$, and the first statement follows.

For the second statement we identify $M$ with $d\psi$ and decompose $M$ into a product
\[
M=M_{\sigma}M_{\rho}, \quad \text{with} \quad 
M_{\sigma}=\left( {\begin{array}{*{20}c}
   {1} & {0}  \\
   {0} & {1+ \sigma}  \\
\end{array}} \right),\;
M_{\rho} = \left( {\begin{array}{*{20}c}
   {1} & {\rho}  \\
   {0} & {1
  }  \\
\end{array}} \right)
\]
For the operator norms  we get $\max\{\vert M_{\sigma}\vert, \vert M^{-1}_{\sigma}\vert\}  = \frac{1}{1-\vert \sigma \vert}$, $\vert M_{\rho} \vert^2 = \vert M^{-1}_{\rho} \vert^2 = 1 + \vert \rho \vert\sqrt{1 + \frac14 \rho^2} + \frac12 \rho^2$ and the bound follows by elementary simplification.
\end{proof}

\section{Concluding remarks}\label{sec:concluding}

We finish with a refinement of \textbf{Theorem~\ref{thm:main}} and some further corollaries.

Let again $Y =Y_{l_1,l_2,l_3}$ be as in Section~\ref{sec:results}. For $i = 1,2,3$ we have the collars $\mathcal{C}_i$ as in \eqref{eq:collars}. In addition to this we define the \emph{reduced collars} 
\begin{equation}
   \hat{\mathcal{C}}_i = \{ p \in Y \mid \dist(p,\gamma_i) < w_i \} , \text{\ where\ } w_i:=\log \left(\frac{2}{l_i}\right),
\label{eq:reducedC}
\end{equation}
(assuming $l_i > 0$), where we recall that $l_i = \ell(\gamma_i)$. If $l_i \geq 2$ then $\hat{\mathcal{C}}_i$ is simply the empty set. For $0 < l_i < 2$ the width $w_i$ is smaller than the width $\omega_i$ of $\mathcal{C}_i$. In fact, by \eqref{eq:collars} we even have
\begin{equation}
\sinh(w_i + \log(2))  \sinh(\frac{l_i}{2}) = \frac12\left( \frac{4}{l_i}-\frac{l_i}{4} \right)\sinh(\frac{l_i}{2} )< 1=\sinh(\omega_i) \sinh(\frac{l_i}{2}).
\label{eq:wipluslog2}
\end{equation}
Hence, $\hat{\mathcal{C}}_i$ is a proper subset of $\mathcal{C}_i$. When $\ell(\gamma_i) < 2$ the boundary component of $\hat{\mathcal{C}}_i$ that lies in the interior of $Y$ is an equidistant curve of $\gamma_i$ of length
\[
   \hat{l}_i := l_i \cosh(w_i) = 1 + \frac14 l_i^2.
\]
We extend this by setting
\[
   \hat{l}_i := 1  \text{\ if\ } l_i = 0 \quad \text{and} \quad \hat{l}_i := l_i \text{\ if\ } l_i \geq 2.
\]
In the degenerate case where $l_i = 0$ and $\gamma_i$ is a puncture the reduced collar $\hat{\mathcal{C}}_i$ is defined as the set of all points that lie outside the horocycle of length 1 around the puncture. In this case the boundary components of $\hat{\mathcal{C}}_i$ and $\mathcal{C}_i$ in the interior of $Y$ are equidistant curves with distance $\log(2)$.\\
Cutting $\hat{\mathcal{C}}_1$, $\hat{\mathcal{C}}_2$, $\hat{\mathcal{C}}_3$ away we obtain the \emph{reduced Y-piece} (or `kernel')
\begin{equation}
  \hat{Y}_{l_1,l_2,l_3} = Y_{l_1,l_2,l_3}^{\hat{l}_1,\hat{l}_2,\hat{l}_3} = Y_{l_1,l_2,l_3} \setminus (\hat{\mathcal{C}}_1 \cup \hat{\mathcal{C}}_2 \cup \hat{\mathcal{C}}_3)
 \label{eq:reducedY}
\end{equation}
(see the notation introduced below \eqref{eq:uhp_metric}). We now have

\begin{thm}
The mapping $\phi : Y_{l_1,l_2,\epsilon} \rightarrow Y^{\epsilon^*}_{l_1,l_2}$ constructed in Section~\ref{sec:proofthm3} has the following properties.
\begin{enumerate}[$(i)$]
\item $\phi$ is boundary coherent and quasiconformal with dilatation $q_{\phi} \leq 1+ 2\epsilon^2$.
\item $\phi(\hat{Y}_{l_1,l_2,\epsilon}) = \hat{Y}_{l_1,l_2,0}$.
\item The restriction of $\phi$ to $\hat{Y}_{l_1,l_2,\epsilon}$ is boundary coherent and has length distortion $ \leq 1+\frac52 \epsilon^2$.
\item The restriction of $\phi$ to $\hat{\mathcal{C}}_i$ is an isometry for $i \in \{1,2\}$.
\end{enumerate}
\label{thm:mainrefined}
\end{thm}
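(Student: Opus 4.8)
The plan is to verify the four claims essentially by tracking the construction in Section~\ref{sec:proofthm3} and checking where each piece lands. Claim $(i)$ is already established: it is exactly Theorem~\ref{thm:main} together with the boundary coherence observed at the end of Section~\ref{sec:proofthm3}, so nothing new is needed. For claim $(iv)$, recall that $\phi=\phi_\beta=H_\beta\circ G_\beta\circ F_\beta$ on $\mathcal{H}_\beta$, that $F_\beta$ acts isometrically on the quadrilaterals $B_iA_iC_iH_i$ by \eqref{eq:Fbeta1}, and that $G_\beta$ and $H_\beta$ are the identity below the line $y=a$ (resp. below $\mathcal{H}'_{a,2a}$). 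Since the reduced collar $\hat{\mathcal{C}}_i$ lies within the region where $t\le -\eta_1$ (resp. $t\ge\eta_2$) — which one needs to confirm using that $w=\log(2/\epsilon)$ matches the definition \eqref{eq:reducedC} of $w_i$ and that $\hat{\mathcal{C}}_i$ for $i\in\{1,2\}$ is bounded away from $c$ and from $\beta$ — the composite $\phi$ restricted to $\hat{\mathcal{C}}_i$ coincides with a pure horizontal translation in Fermi coordinates, hence an isometry, giving $(iv)$.

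For claim $(ii)$ the key observation is that $w=w_3$ for the third boundary, i.e. the cutting curve $\beta$ used in the construction is precisely the inner boundary of $\hat{\mathcal{C}}_3$ in $Y_{l_1,l_2,\epsilon}$; indeed $\hat l_3 = \epsilon\cosh(w)=1+\tfrac14\epsilon^2$ matches the length of $\beta$ computed in \eqref{eq:simplify_tanhw}. On the image side one checks that $\phi^\beta(\beta)$ is the horocycle of (Euclidean and hyperbolic) length corresponding to the horocycle at distance $\log 2$ from the top, i.e. the inner boundary of $\hat{\mathcal{C}}_3$ in the degenerate piece $Y_{l_1,l_2,0}$: this follows from the normalization $k_\epsilon$ and the fact that $F^\beta$ sends $\beta$ to height $2a$ while the cusp horocycle of length $\hat l_3=1$ sits at height $2a/\hat l_3^*$ — one verifies the distance between these two horocycles is $\log 2$. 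Since $\phi$ is a homeomorphism mapping $\beta$ to this curve and the boundary geodesics/horocycles $\gamma_1,\gamma_2$ to the corresponding curves, and since by $(iv)$ it maps $\hat{\mathcal{C}}_1,\hat{\mathcal{C}}_2$ isometrically onto the reduced collars of $Y_{l_1,l_2,0}$, one concludes $\phi(\hat Y_{l_1,l_2,\epsilon})=\hat Y_{l_1,l_2,0}$ by comparing complements.

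Claim $(iii)$ is the substantive one. The boundary coherence on $\hat Y$ follows from the boundary coherence of $\phi$ together with $(ii)$ and $(iv)$. For the length distortion one must bound $\|dF_\beta(v)\|$, $\|dG_\beta(v)\|$, $\|dH_\beta(v)\|$ from above and below on the relevant regions. On $\hat{\mathcal{C}}_1,\hat{\mathcal{C}}_2$ the distortion is $1$ by $(iv)$. On the rest of $\mathcal{H}_\beta\cap\hat Y$: $F_\beta$ has length distortion $\le q_{F_\beta}\le(1-\tfrac13\epsilon^2)^{-1}$ as remarked after \eqref{eq:Fbeta1}; for $G_\beta$ and $L_\beta=H_\beta^{-1}$ one invokes the length-distortion bound $q'\le\frac{1+\frac12|\rho|+\frac18\rho^2}{1-|\sigma|}$ of Lemma~\ref{thm:lemma2}, fed with the same $|\rho|,|\sigma|\le\tfrac13\epsilon^2$ (resp. $|\sigma|\le\tfrac12\epsilon^2$, $|\rho|\le\tfrac14\epsilon^2$) already computed for $J_{G_\beta}$ in \eqref{eq:JGb} and $J_{L_\beta}$ in \eqref{eq:JacobLbeta}. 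On $\mathcal{H}^\beta\cap\hat Y$ the map $\phi^\beta=G^\beta\circ F^\beta$ with $F^\beta$ conformal, so the length distortion there — which is what we must check — comes from $F^\beta$ being conformal (hence equal to its linear conformal factor) composed with the vertical scaling $G^\beta$ of ratio $1/k_\epsilon\le 1$; here one needs to confirm that on the reduced piece the conformal factor of $F^\beta$ stays within the claimed bounds, using \eqref{eq:dilatation_kepsilon}. Multiplying the three (or two) factors and using $(1-\tfrac13\epsilon^2)^{-1}(1+\tfrac23\epsilon^2)\cdot(\text{small corrections})\le 1+\tfrac52\epsilon^2$ for $\epsilon\le\tfrac12$ gives the bound. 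The main obstacle is the $\mathcal{H}^\beta$ part: unlike the dilatation, the length distortion of $F^\beta$ is not automatically controlled by conformality — one must quantify how much the conformal factor $|(F^\beta)'|$ varies over $\hat{\mathcal{C}}_3\cap\mathcal{H}^\beta$ (equivalently over the image rectangle), and combine that with the vertical correction $G^\beta$; getting the constant down to $\tfrac52$ rather than something larger requires being careful about where these extreme factors occur relative to the reduced collar boundary $\beta$ versus the cusp.
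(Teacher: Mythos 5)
Your outline for (i) and (iv) is essentially the paper's: (i) is Theorem~\ref{thm:main}, and (iv) reduces to showing that $F_{\beta}$ acts isometrically on the relevant strip and that its image lies below $y=a$, so that $G_{\beta}$ and $H_{\beta}$ leave it alone. You correctly identify what needs confirming there, but leave the two quantitative checks as asides; the paper proves $\dist(C_i,H_i)>w_i$ via $\eta_i<\log 2$ (from \eqref{eq:upperbound_thetai}) together with $\dist(C_i,D_0)>w_i+\log 2$ (from \eqref{eq:wipluslog2} and \eqref{eq:pent_formulas}), and proves the image stays below $y=a$ by computing that $A'_i$ sits at hyperbolic distance $\arcsinh(1/\sinh\alpha_i)=\omega_i>w_i$ from that line. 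These are short but not automatic.

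The substantive gap is in (iii). You treat ``the $\mathcal{H}^\beta$ part'' as the main obstacle and propose to quantify the variation of the conformal factor of $F^\beta$ over $\hat{\mathcal{C}}_3\cap\mathcal{H}^\beta$, but that set is empty (up to its boundary $\beta$). By construction $w=\log(2/\epsilon)=w_3$, so $\mathcal{H}^\beta$ is precisely $\hat{\mathcal{C}}_3\cap\mathcal{H}$, which is \emph{removed} when forming $\hat Y_{l_1,l_2,\epsilon}$. The length-distortion bound in (iii) is therefore only about $\phi_\beta=H_\beta\circ G_\beta\circ F_\beta$ on $\mathcal{H}_\beta\cap\hat Y$; the map $\phi^\beta=G^\beta\circ F^\beta$ never enters. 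This is exactly why the exponentially-behaved thin end can be handled conformally in Theorem~\ref{thm:main} without ruining (iii): its region is excised from $\hat Y$. In addition, your accounting of the $G_\beta$ contribution is incomplete. Lemma~\ref{thm:lemma2} gives a \emph{Euclidean} length-distortion bound; since $G_\beta$ changes the $y$-coordinate, one must multiply by the extra factor $\max\{y'/y,\,y/y'\}\le 1+\tfrac16\epsilon^2$ coming from Lemma~\ref{thm:lemma1}(i) to convert to hyperbolic length distortion, whereas for $H_\beta$ (and $L_\beta$) the Euclidean bound transfers directly because $\im(H_\beta(z))=\im(z)$. With these two corrections the product $(1-\tfrac13\epsilon^2)^{-1}\cdot(1+\tfrac34\epsilon^2)\cdot(1+\tfrac16\epsilon^2)(1+\tfrac35\epsilon^2)$ indeed stays below $1+\tfrac52\epsilon^2$ for $\epsilon\le\tfrac12$, which is the paper's route.
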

\begin{proof}
(i) is the restatement of \textbf{Theorem~\ref{thm:main}}. We proceed with (iv) and assume that  $0 < l_i < 2$ (for $l_i \geq 2$ there is nothing to prove and for $l_i = 0$ we may argue by continuity).The task is to show that the mapping $F_{\beta}$ \eqref{eq:Fbeta1} acts isometrically on the set $\{ p \in \mathcal{H} \mid \dist(p, \alpha_i) \leq w_i \}$ and that its image under $F_{\beta}$ is untouched by the adjusting mappings $G_{\beta}$ and $H_{\beta}$ \eqref{eq:newphisubbeta}. As $F_{\beta}$ acts isometrically on the shaded area $B_iA_iC_iA_i$ (\textit{Figure 3}) and $G_{\beta}$ and $H_{\beta}$ act isometrically below the line $y = a$ it suffices  to show that $\dist(C_i,H_i) > w_i$ and the (hyperbolic) distance from $A'_i$ to the line $y=a$ is $> w_i$. Now, by \eqref{eq:wipluslog2} and \eqref{eq:pent_formulas} $\dist(C_i,D_0) > w_i + \log(2)$, while by \eqref{eq:upperbound_thetai} $\dist(H_i,D_0) = \eta_i < \log(2)$ which implies the first inequality. As for the second, by \eqref{eq:A1A2prime} $A'_i$ has imaginary part $\sinh(\alpha_i)$ and so the distance from $A'_i$ to the line $y = a = \cosh(\alpha_1) + \cosh(\alpha_2)$ is
\[
\log(a)-\log(\sinh(\alpha_i)) \geq \log(1+\cosh(\alpha_i))-\log(\sinh(\alpha_i)) = \arcsinh\left(\frac{1}{\sinh(\alpha_i)}\right)  =\omega_i > w_i.
\]
Statement (ii) and the boundary coherence in (iii) are immediate consequences of (iv), \eqref{eq:newphisubbeta} and the remark following \eqref{eq:newphisubbeta}. It remains to estimate the length distortion of $\phi_{\beta} =  H_{\beta} \circ G_{\beta} \circ F_{\beta}$.

By \eqref{eq:dilatation_estimate} and the remark preceding it $F_{\beta}$ has length distortion $\leq (1-\frac13\epsilon^2)^{-1}$. By \textbf{Lemma \ref{thm:lemma2}} and the estimates below \eqref{eq:JacobLbeta} $L_{\beta}$ and its inverse $H_{\beta}$ have length distortion $\leq 1 + \frac34 \epsilon^2$ with respect to the Euclidean metric. Since $\im(H_{\beta}(z)) = \im(z)$ for all $z$ in the domain of $H_{\beta}$ the same bound holds for the hyperbolic metric \eqref{eq:uhp_metric}. By \eqref{eq:JGb} and the estimates thereafter $G_{\beta}$ has Euclidean distortion $\leq 1 + \frac35 \epsilon^2$. By \textbf{Lemma \ref{thm:lemma1}}(i) the imaginary parts $y$ of $z$ and $y'$ of $G_{\beta}(z)$ satisfy
$
\max\{\frac{y'}{y},\frac{y}{y'}\} \leq 1 + \frac16 \epsilon^2.
$\,
Hence, $G_{\beta}$ has hyperbolic length distortion $\leq (1 + \frac16 \epsilon^2)(1+\frac35 \epsilon^2)$. Multiplying the three bounds we get (iii)
\end{proof}

\textbf{Corollary~\ref{cor:cor1}} is an immediate consequence of \textbf{Theorem~\ref{thm:mainrefined}}: A first mapping with dilatation $\leq 1 + 2\epsilon_3^2$ sends $Y_{l_1,\epsilon_2,\epsilon_3}$ onto $Y_{l_1,\epsilon_2}^{\epsilon_3^*}=Y_{l_1,\epsilon_2,0}^{l_1,\epsilon_2,\epsilon_3^*}$, where  $\gamma_3$ is sent to the horocycle $h_{\epsilon_3^*}$ of length $\epsilon_3^* = \frac{2}{\pi} \epsilon_3 <1$ (\textit{Figure 1}). This horocycle lies in the reduced collar $\hat{\mathcal{C}}_3$ of $Y_{l_1,\epsilon_2,0}$. A second mapping with dilatation $\leq 1 + 2\epsilon_2^2$ sends $Y_{l_1,\epsilon_2,0}$ onto $Y_{l_1,0,0}^{l_1,\epsilon_2^*,0}$. As $\hat{\mathcal{C}}_3$ is mapped isometrically $Y_{l_1,\epsilon_2}^{\epsilon_3^*}$ is sent onto $Y_{l_1,0,0}^{l_1,\epsilon_2^*,\epsilon_3^*}$.

\bigskip

We state two further corollaries. Combining two mappings as in \textbf{Theorem}~\ref{thm:mainrefined} we get
\begin{cor}
Let $0 \leq l_1,l_2$, $0 < \epsilon, \bar{\epsilon} \leq \frac{1}{2}$. Then there exists a boundary coherent homeomorphism between the reduced Y-pieces 
\[
   \phi : \hat{Y}_{l_1,l_2,\epsilon} \rightarrow \hat{Y}_{l_1,l_2,\bar{\epsilon}}
\]
with quasiconformal dilatation $q_{\phi} \leq (1+2 \epsilon^2)(1+2 \bar{\epsilon}^2)$ and length distortion $q'_{\phi}\leq (1+\frac52 \epsilon^2)(1+\frac52 \bar{\epsilon}^2)$.
\label{cor:quasiisom2}
\end{cor}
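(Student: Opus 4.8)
The plan is to factor the desired map through the intermediate degenerate Y-piece $Y_{l_1,l_2,0}$ with a cusp at the third boundary, exactly as in the proof of Corollary~\ref{cor:cor1}, but keeping track of the behaviour on the reduced collars. First I would apply Theorem~\ref{thm:mainrefined} to the Y-piece $Y_{l_1,l_2,\epsilon}$: it yields a boundary coherent quasiconformal homeomorphism $\phi_1 : Y_{l_1,l_2,\epsilon}\to Y^{\epsilon^*}_{l_1,l_2}$ with dilatation $\le 1+2\epsilon^2$, satisfying $\phi_1(\hat Y_{l_1,l_2,\epsilon}) = \hat Y_{l_1,l_2,0}$ and with length distortion $\le 1+\tfrac52\epsilon^2$ on $\hat Y_{l_1,l_2,\epsilon}$. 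Independently, apply the same theorem to $Y_{l_1,l_2,\bar\epsilon}$, obtaining $\phi_2 : Y_{l_1,l_2,\bar\epsilon}\to Y^{\bar\epsilon^*}_{l_1,l_2}$ with the analogous properties; in particular $\phi_2(\hat Y_{l_1,l_2,\bar\epsilon}) = \hat Y_{l_1,l_2,0}$, so $\phi_2^{-1}$ is defined on $\hat Y_{l_1,l_2,0}$ and restricts to a boundary coherent map $\hat Y_{l_1,l_2,0}\to \hat Y_{l_1,l_2,\bar\epsilon}$ with dilatation $\le 1+2\bar\epsilon^2$ and length distortion $\le 1+\tfrac52\bar\epsilon^2$ (dilatation and length distortion are invariant under inversion). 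Then set $\phi := \phi_2^{-1}\circ\phi_1$ restricted to $\hat Y_{l_1,l_2,\epsilon}$.

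The key point I must check is that the composition is well defined and has the claimed image, i.e. that $\phi_1$ maps $\hat Y_{l_1,l_2,\epsilon}$ onto exactly the domain $\hat Y_{l_1,l_2,0}$ on which $\phi_2^{-1}$ acts. This is precisely statement (ii) of Theorem~\ref{thm:mainrefined} applied with the two values $\epsilon$ and $\bar\epsilon$: both reduced Y-pieces sitting inside $Y^{\epsilon^*}_{l_1,l_2}$ and $Y^{\bar\epsilon^*}_{l_1,l_2}$ coincide with the reduced Y-piece $\hat Y_{l_1,l_2,0}$ of the common degenerate target, because the reduced collar of the cusp is defined intrinsically (the region outside the horocycle of length $1$) and the reduced collars around $\gamma_1,\gamma_2$ depend only on $l_1,l_2$. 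Hence $\phi_1(\hat Y_{l_1,l_2,\epsilon}) = \hat Y_{l_1,l_2,0} = \phi_2(\hat Y_{l_1,l_2,\bar\epsilon})$, so $\phi = \phi_2^{-1}\circ\phi_1 : \hat Y_{l_1,l_2,\epsilon}\to \hat Y_{l_1,l_2,\bar\epsilon}$ is a homeomorphism.

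It remains to combine the estimates. Boundary coherence of $\phi$ follows because both $\phi_1$ and $\phi_2^{-1}$ are boundary coherent on the reduced pieces by Theorem~\ref{thm:mainrefined}(iii), and composition of boundary coherent maps (with matching standard parametrizations on the common boundary curves $\gamma_1,\gamma_2$ and on the horocyclic boundary curve coming from the former $\gamma_3$) is boundary coherent. For the dilatation, the dilatation of a composition is at most the product of the dilatations, giving $q_\phi \le (1+2\epsilon^2)(1+2\bar\epsilon^2)$; likewise length distortion is submultiplicative under composition, giving $q'_\phi \le (1+\tfrac52\epsilon^2)(1+\tfrac52\bar\epsilon^2)$. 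I expect the only mild subtlety — the main thing to state carefully rather than a genuine obstacle — is the matching of parametrizations at the interface so that boundary coherence really is preserved under composition; once one observes that both maps send the relevant standard-parametrized curves to standard-parametrized curves with the same parameter, the corollary follows immediately from Theorem~\ref{thm:mainrefined}.
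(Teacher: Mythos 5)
Your proposal is correct and is exactly the intended argument: the paper's "proof" of this corollary is the one-line remark "Combining two mappings as in Theorem~\ref{thm:mainrefined} we get," and you have carried out precisely that composition $\phi_2^{-1}\circ\phi_1$ on the reduced pieces, with the right appeal to part (ii) of the theorem to identify the common image $\hat Y_{l_1,l_2,0}$ and the standard submultiplicativity of dilatation and length distortion.
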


As the above mappings of the reduced Y-pieces are boundary coherent we may extend them straightforwardly to the adjacent reduced collars in various ways. As an example we have the following which we state without proof.
\begin{cor}
Let $0 \leq l_1,l_2 $, $0 < \bar{\epsilon} \leq \epsilon  \leq \frac{1}{2}$, and set
\[
\delta =\bar{\epsilon}\sec\left\{ \arccos\left(\frac{\bar{\epsilon}}{1+\frac14 \bar{\epsilon}^2}\right) -\frac{\bar{\epsilon}}{\epsilon}\arccos\left(\frac{\epsilon}{1+\frac14 \epsilon^2}\right) \right\}.
\]
Then there exists a boundary coherent quasiconformal homeomorphism
\[
   \phi : Y_{l_1,l_2,\epsilon} \rightarrow Y_{l_1,l_2,\bar{\epsilon}}^{l_1,l_2,\delta}
\]
with dilatation $q_{\phi} \leq (1+2 \epsilon^2)(1+2 \bar{\epsilon}^2)$ that, moreover, acts isometrically on the reduced collars $\hat{\mathcal{C}}_1$, $\hat{\mathcal{C}}_2$ and is conformal on the reduced collar $\hat{\mathcal{C}}_3$ of $Y_{l_1,l_2,\epsilon}$. 
The constant $\delta$ has the estimates
\[
\frac{2}{\pi}\epsilon \cdot \boldsymbol{s}(\tfrac{\bar{\epsilon}}{\epsilon}) - \frac{\epsilon^4}{12} \leq \delta \leq \frac{2}{\pi}\epsilon \cdot \boldsymbol{s}(\tfrac{\bar{\epsilon}}{\epsilon}),
\]
where $\boldsymbol{s}(t) =\frac{\frac{\pi}{2}t}{\sin(\frac{\pi}{2}t)}$, $t \in [0,1]$.
\label{cor:quasiYYbar}
\end{cor}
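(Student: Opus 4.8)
The plan is to assemble $\phi$ from four pieces — one on the kernel $\hat{Y}_{l_1,l_2,\epsilon}$ and one on each reduced collar of $Y_{l_1,l_2,\epsilon}$ — and to glue them along the piecewise real analytic curves $\partial\hat{Y}_{l_1,l_2,\epsilon}$. On $\hat{Y}_{l_1,l_2,\epsilon}$ I would take the boundary coherent homeomorphism onto $\hat{Y}_{l_1,l_2,\bar{\epsilon}}$ supplied by \textbf{Corollary~\ref{cor:quasiisom2}}, which already has dilatation $\le(1+2\epsilon^2)(1+2\bar{\epsilon}^2)$. On $\hat{\mathcal{C}}_1$ and $\hat{\mathcal{C}}_2$ I would simply use the isometry onto the corresponding reduced collar of $Y_{l_1,l_2,\bar{\epsilon}}$: these collars depend only on $l_1$, resp.\ $l_2$, and the isometry compatible with the standard parametrisations is boundary coherent, hence agrees with the \textbf{Corollary~\ref{cor:quasiisom2}} map along $\partial\hat{\mathcal{C}}_i$. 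The only real work is on the third reduced collar $\hat{\mathcal{C}}_3$ of $Y_{l_1,l_2,\epsilon}$, where $\phi$ must be conformal, must agree with the map already built along the inner boundary curve $\beta_\epsilon$ (the equidistant curve of $\gamma_3$ of length $1+\tfrac14\epsilon^2$), and must carry $\gamma_3$ onto the boundary curve $\beta_3$ of $Y_{l_1,l_2,\bar{\epsilon}}^{l_1,l_2,\delta}$.

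The key observation is that each of the two collars in question is conformally a flat cylinder of explicit modulus. Modelling the collar of a boundary geodesic of length $\ell$ in $\Hy$ as the wedge $\{\,z:|\arg z-\tfrac\pi2|\le\psi_0\,\}$ modulo $z\mapsto e^{\ell}z$ and passing to $\zeta=\log z$ (under which the hyperbolic metric becomes $|d\zeta|^2/\sin^2(\im\zeta)$, conformal to the flat one), a one sided collar out to distance $\rho$ becomes the cylinder of circumference $\ell$ and height equal to the angular width $\psi_0(\rho)=\arccos(1/\cosh\rho)$, hence of conformal modulus $\psi_0(\rho)/\ell$, with equidistant boundary curve of length $\ell\cosh\rho=\ell/\cos\psi_0(\rho)$. (This is also the rectangle $F^\beta(\mathcal{H}^\beta)$ of \textbf{Theorem~\ref{thm:main}} doubled along $d_1,d_2$.) In particular $\hat{\mathcal{C}}_3\subset Y_{l_1,l_2,\epsilon}$ has modulus $W(\epsilon)/\epsilon$ with $W(\epsilon):=\arccos\!\big(\tfrac{\epsilon}{1+\epsilon^2/4}\big)$ (using $\cosh\log(2/\epsilon)=\tfrac{4+\epsilon^2}{4\epsilon}$), while the subannulus of $\hat{\mathcal{C}}_3\subset Y_{l_1,l_2,\bar{\epsilon}}$ bounded by $\beta_{\bar{\epsilon}}$ and by the equidistant curve of length $\delta$ has modulus $\big(W(\bar{\epsilon})-\psi\big)/\bar{\epsilon}$, where $\psi=\arccos(\bar{\epsilon}/\delta)$ is the angular width at which the length reaches $\delta$. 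Since a conformal homeomorphism between two annuli exists precisely when their moduli coincide, boundary coherence forces
\[
\psi=W(\bar{\epsilon})-\tfrac{\bar{\epsilon}}{\epsilon}W(\epsilon),\qquad\text{that is,}\qquad\delta=\bar{\epsilon}\,\sec\!\Big(W(\bar{\epsilon})-\tfrac{\bar{\epsilon}}{\epsilon}W(\epsilon)\Big),
\]
which is exactly the value of $\delta$ in the statement. As $t\mapsto W(t)/t$ and $t\mapsto\arcsin\!\big(\tfrac{t}{1+t^2/4}\big)/t$ are decreasing on $(0,\tfrac12]$ (elementary), one gets $0\le\psi<W(\bar{\epsilon})$, so $\beta_3$ really is an equidistant curve lying strictly inside $\hat{\mathcal{C}}_3\subset Y_{l_1,l_2,\bar{\epsilon}}$ (indeed $\bar{\epsilon}\le\delta\le\epsilon\le\tfrac12<1+\tfrac14\bar{\epsilon}^2$), and the two cylinders are genuinely conformally equivalent.

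To pin down the conformal map I would use that a conformal homeomorphism between cylinders of equal modulus is, up to orientation, affine in the angular coordinate; it therefore sends constant speed boundary parametrisations to constant speed parametrisations and is determined by the image of one boundary point, and a rotation normalising one basepoint normalises the other automatically whenever the two basepoints lie on a common generator. Here the basepoints of the standard parametrisations of $\gamma_3$ and $\beta_\epsilon$ in $Y_{l_1,l_2,\epsilon}$ (resp.\ of $\beta_3$ and $\beta_{\bar{\epsilon}}$ in $Y_{l_1,l_2,\bar{\epsilon}}$) all lie on the lift of the common perpendicular from $\gamma_2$ to $\gamma_3$, i.e.\ on one and the same generator. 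Taking the orientation preserving representative thus yields a conformal homeomorphism of $\hat{\mathcal{C}}_3\subset Y_{l_1,l_2,\epsilon}$ onto the above subannulus that is boundary coherent along both boundary curves; in particular it matches the \textbf{Corollary~\ref{cor:quasiisom2}} map along $\beta_\epsilon$ and carries $\gamma_3$, with the standard parametrisation, onto $\beta_3$. Gluing the four pieces gives a homeomorphism $\phi:Y_{l_1,l_2,\epsilon}\to Y_{l_1,l_2,\bar{\epsilon}}^{l_1,l_2,\delta}$ — the images fit together because $Y_{l_1,l_2,\bar{\epsilon}}^{l_1,l_2,\delta}=\hat{Y}_{l_1,l_2,\bar{\epsilon}}\cup\hat{\mathcal{C}}_1\cup\hat{\mathcal{C}}_2\cup\big(\hat{\mathcal{C}}_3\cap\{\dist(\cdot,\gamma_3)\ge\rho'\}\big)$ with $\cosh\rho'=\delta/\bar{\epsilon}$ — and $\phi$ is continuous across the removable interface curves, hence quasiconformal with dilatation the maximum of the pieces' dilatations, i.e.\ $\le(1+2\epsilon^2)(1+2\bar{\epsilon}^2)$; it is boundary coherent by construction (along $\gamma_1,\gamma_2$ via the isometries, along $\gamma_3$ via the conformal collar map) and acts isometrically on $\hat{\mathcal{C}}_1,\hat{\mathcal{C}}_2$ and conformally on $\hat{\mathcal{C}}_3$.

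It remains to prove the estimates for $\delta$. Write $W(t)=\tfrac\pi2-g(t)$ with $g(t)=\arcsin\!\big(\tfrac{t}{1+t^2/4}\big)$, so $W(\bar{\epsilon})-\tfrac{\bar{\epsilon}}{\epsilon}W(\epsilon)=\tfrac\pi2\big(1-\tfrac{\bar{\epsilon}}{\epsilon}\big)-\Delta$ with $\Delta:=g(\bar{\epsilon})-\tfrac{\bar{\epsilon}}{\epsilon}g(\epsilon)=\bar{\epsilon}\big(\tfrac{g(\bar{\epsilon})}{\bar{\epsilon}}-\tfrac{g(\epsilon)}{\epsilon}\big)$. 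Since $t\mapsto g(t)/t$ is decreasing, $\Delta\ge0$, whence $W(\bar{\epsilon})-\tfrac{\bar{\epsilon}}{\epsilon}W(\epsilon)\le\tfrac\pi2(1-\tfrac{\bar{\epsilon}}{\epsilon})$ and
\[
\delta=\bar{\epsilon}\,\sec\!\Big(W(\bar{\epsilon})-\tfrac{\bar{\epsilon}}{\epsilon}W(\epsilon)\Big)\le\frac{\bar{\epsilon}}{\cos\!\big(\tfrac\pi2(1-\tfrac{\bar{\epsilon}}{\epsilon})\big)}=\frac{\bar{\epsilon}}{\sin\!\big(\tfrac\pi2\tfrac{\bar{\epsilon}}{\epsilon}\big)}=\frac{2}{\pi}\epsilon\,\boldsymbol{s}\!\Big(\tfrac{\bar{\epsilon}}{\epsilon}\Big),
\]
the upper bound. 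For the lower bound, by the mean value theorem applied to $\sec$ on $\big[W(\bar{\epsilon})-\tfrac{\bar{\epsilon}}{\epsilon}W(\epsilon),\,\tfrac\pi2(1-\tfrac{\bar{\epsilon}}{\epsilon})\big]$ and $\sec'=\sec\tan\le\sec^2$,
\[
\tfrac2\pi\epsilon\,\boldsymbol{s}\!\big(\tfrac{\bar{\epsilon}}{\epsilon}\big)-\delta\le\bar{\epsilon}\,\Delta\,\sec^2\!\big(\tfrac\pi2(1-\tfrac{\bar{\epsilon}}{\epsilon})\big)=\frac{\Delta}{\bar{\epsilon}}\Big(\tfrac2\pi\epsilon\,\boldsymbol{s}\!\big(\tfrac{\bar{\epsilon}}{\epsilon}\big)\Big)^2\le\Big(1-\tfrac{g(\epsilon)}{\epsilon}\Big)\,\epsilon^2\le\frac{\epsilon^4}{12},
\]
using $\tfrac{\Delta}{\bar{\epsilon}}\le1-\tfrac{g(\epsilon)}{\epsilon}$, the bound $\tfrac2\pi\epsilon\,\boldsymbol{s}(\tfrac{\bar{\epsilon}}{\epsilon})\le\epsilon$ (as $\boldsymbol{s}\le\tfrac\pi2$ on $[0,1]$), and the elementary inequality $g(\epsilon)\ge\epsilon-\tfrac{\epsilon^3}{12}$ on $(0,\tfrac12]$.

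The conceptual content is thus the modulus bookkeeping — recognising the reduced collar as a conformal cylinder and reading off that boundary coherence dictates $\delta$ — together with the ``common generator'' argument that lets a single rotation fix the conformal collar map; the anticipated technical nuisance is the $\delta$–estimate, where $\sec$ blows up as $\bar{\epsilon}/\epsilon\to0$, but this is harmless because it enters only through $\bar{\epsilon}\sec(\cdots)=\tfrac2\pi\epsilon\,\boldsymbol{s}(\tfrac{\bar{\epsilon}}{\epsilon})$, which stays bounded.
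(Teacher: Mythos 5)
The paper states this corollary explicitly ``without proof,'' so there is no argument of the authors' to measure yours against; judged on its own, your construction is correct and is almost certainly the intended one. The decomposition into kernel plus three reduced collars --- \textbf{Corollary~\ref{cor:quasiisom2}} on $\hat{Y}_{l_1,l_2,\epsilon}$, isometries on $\hat{\mathcal{C}}_1,\hat{\mathcal{C}}_2$, a conformal cylinder map on $\hat{\mathcal{C}}_3$ --- is exactly what the remark preceding the corollary suggests, and your modulus bookkeeping (height $\arccos(1/\cosh\rho)$ over circumference $\ell$, which for $\rho=\log(2/\ell)$ is the angle $W$ of \eqref{eq:sincosW}) reproduces the stated $\delta$ on the nose, a strong consistency check; the common-generator argument for boundary coherence is right, since the perpendiculars to $\gamma_3$ are precisely the vertical lines of the flat coordinate $\log z$. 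Two points you should make fully explicit. First, all your monotonicity and Taylor claims become transparent once one observes $g(t)=\arcsin\bigl(\tfrac{t}{1+t^2/4}\bigr)=2\arctan(\tfrac{t}{2})$, so that $g'(t)=(1+\tfrac14 t^2)^{-1}>0$, $g(t)/t$ decreases from $1$, $g(t)\geq t-\tfrac{t^3}{12}$ is the alternating-series bound, and $W(t)/t$ is decreasing (which you need for $\psi\geq 0$, i.e.\ for the curve of length $\delta$ to exist between $\gamma_3$ and the inner boundary of $\hat{\mathcal{C}}_3$) because $g'\geq 0$ and $g<\tfrac{\pi}{2}$. Second, the gluing step tacitly uses that a homeomorphism which is $K$-quasiconformal off a finite union of analytic arcs is $K$-quasiconformal globally (removability of the interface curves); this is standard but worth citing. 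Neither is a gap in substance, and your two-sided estimate for $\delta$ via the mean value theorem for $\sec$ is correct as written.
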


Finally, we remark that for Y-pieces with large $\ell(\gamma_1)$, $\ell(\gamma_2)$, heuristic arguments indicate that the optimal constant for \textbf{Theorem~\ref{thm:main}} should satisfy $q_{\phi} \geq 1 + K \epsilon^2$, where $K$ is independent of $\epsilon$, $\ell(\gamma_1)$, $\ell(\gamma_2)$. However, we do not have a rigorous proof at this moment and it is also not clear whether it is true in general.


\section*{Acknowledgment}
While working on this article the third author was supported by the Alexander von Humboldt foundation.

\vspace{1cm}

\noindent Peter Buser \\
\noindent Department of Mathematics, Ecole Polytechnique F\'ed\'erale de Lausanne\\
\noindent Station 8, 1015 Lausanne, Switzerland\\
\noindent e-mail: \textit{peter.buser@epfl.ch}\\
\\
\\
\noindent Eran Makover\\
\noindent Department of Mathematics, Central Connecticut State University\\
\noindent 1615 Stanley Street, New Britain, CT 06050, USA\\
\noindent e-mail: \textit{makovere@ccsu.edu}\\
\\
\\
\noindent Bjoern Muetzel and Robert Silhol\\	
\noindent Department of Mathematics, Universit\'e Montpellier 2 \\
\noindent place Eug\`ene Bataillon, 34095 Montpellier cedex 5, France \\
\noindent e-mail: \textit{bjorn.mutzel@gmail.com} and \textit{robert.silhol@math.univ-montp2.fr}\\

\end{document}